\newtheorem{teorema}{Theorem}[section]
\newtheorem*{theorem*}{Structure Theorem}
\newtheorem{lemma}[teorema]{Lemma}
\newtheorem{propos}[teorema]{Proposition}
\newtheorem{corol}[teorema]{Corollary}
\theoremstyle{definition}
\newtheorem{ex}[teorema]{Example}
\newtheorem{rem}[teorema]{Remark}
\newtheorem{defin}[teorema]{Definition}
\def\R{{\mathbb R}}
\def\C{{\mathbb C}}
\newcommand{\Ci}{\mathcal{C}}
\DeclareMathOperator{\spt}{spt}
\def\oli{\overline}				
\def\de{\partial}
\def\debar{\oli{\de}}
\title[Generalized Levi currents and singular loci]{Generalized Levi currents and
singular loci for families
of plurisubharmonic functions}
\author[F.~Bianchi]{Fabrizio Bianchi\textsuperscript{1}}
\address{\textsuperscript{1}CNRS,  Univ.\ Lille, UMR 8524 - Laboratoire Paul Painlev\'e, F-59000 Lille, France}
\email{fabrizio.bianchi@univ-lille.fr}
\author[S.~Mongodi]{Samuele Mongodi\textsuperscript{2}}
\address{\textsuperscript{2}Dipartimento di Matematica e Applicazioni, Universit\`a di Milano Bicocca, Via Roberto Cozzi 55,
I--20125 Milano, Italy.}
\email{samuele.mongodi@unimib.it}
  \date{\today}
\subjclass[2010]{%
32E05, 32Q28, 32T35, 32U10, 32U40}
\begin{document}
\begin{abstract}
We show how the formalism of Levi currents on complex manifolds,
as introduced by Sibony,
can be used to study the analytic structure of singular sets
associated to families of plurisubharmonic functions, in the sense of Slodkowski.
\end{abstract}

\maketitle

\section{Introduction}

The theory of several complex variables originated as
the study of holomorphic functions; however, soon enough,
plurisubharmonic functions made their appearence and proved themselves
as a useful instrument to solve problems originated in the holomorphic category.

Probably, one of the best known and oldest examples of this phenomenon
is the Levi problem (\cite{Levi}, \cite{Siu} for a survey): characterizing domains
of holomorphy in terms of the pseudoconvexity of the boundary; Oka's solution
\cites{Oka1,Oka2} for domains in $\C^n$ highlighted the role of strictly
plurisubharmonic exhaustions, whose existence was later shown to be
equivalent to Steinness, by Grauert for manifolds \cite{Gra} and Narasimhan
for analytic spaces \cites{Na1,Na2}.

The geometric counterpart of holomorphic functions is represented by complex
analytic varieties, which are locally given as zeroes of holomorphic functions.
This notion does not have a straightforward analogue for plurisubharmonic functions:
they lack the rigidity of holomorphic functions, hence the geometry they describe with
their level sets is not significant, in relation with complex analytic varieties; in fact,
quite the opposite is true: the level sets of a strictly plurisubharmonic function
do not support any kind of complex structure and are examples of B-regular sets
(i.e., sets where the (restrictions of) plurisubharmonic functions are dense in the
continuous ones, introduced in \cites{Sib_classe,Sib_coll}),
which play an important role in the study of the regularity of the $\debar$-problem (see \cite{Sib_classe}).

\medskip

However, there is a rigidity property shared between holomorphic
and plurisubharmonic functions: holomorphic functions obey a maximum
modulus property, which, for instance, characterizes algebras of holomorphic
functions on plane domains (see Rudin \cite{Rudin}),
and (pluri)subharmonic functions likewise satisfy a maximum property.
Indeed, Rudin's theorem can be deduced as a special case of subharmonicity,
in turn obtained via the maximum property \cite{Wer}.

It is quite clear that the maximum property for plurisubharmonic functions does
not hold on arbitrary sets, for example it does not hold on B-regular sets; on the
other hand, given an open domain of an analytic variety, plurisubharmonic functions
attain their maximum on the boundary. It is therefore reasonable to expect that sets
where the maximum property holds should bear some resemblance of complex structure;
this idea originated, more or less explicitly, a number of constructions related to function
algebras, such as Shilov boundary, Jensen boundary, peak points, Choquet boundaries
(see \cite{Gam} for a comprehensive exposition), and some of these were also employed
to give more general definitions of plurisubharmonicity in the context of uniform algebras \cite{GS}.

By localizing the idea of Jensen boundary, we obtain the notion of local maximum
set for $h$-plurisubharmonic functions \cite{Sl}; these sets enjoy many properties
of analytic varieties, with $h+1$ playing the role of the dimension. Moreover, local
maximum sets for plurisubharmonic functions (usually just called local maximum
sets) are $1$-pseudoconcave, in the sense that their complement is $(n-2)$-pseudoconvex
according to Rothstein \cite{Roth}, a property which is true also for complex analytic varieties
of dimension at least $1$. So, for instance, the largest local maximum set contained in the
boundary of a pseudoconvex set will contain any germ of analytic variety and, more
generally, any positive, $\de\debar$-closed current 
of bidimension $(1,1)$ and with compact support
\cite{OS}.

\medskip

In particular, compact local maximum sets, just like compact analytic varieties,
force plurisubharmonic functions (and hence holomorphic functions) to be constant
on them; therefore, no strictly plurisubharmonic function can exist in a neighbourhood
of a compact local maximum set. This consideration was the core of the investigation of
weakly complete spaces, started by Slodkowski and Tomassini in \cite{ST},
with the definition of the kernel of a weakly complete space (which is the set of points
where no plurisubharmonic exhaution can be strictly plurisubharmonic) and continued
by them and the second author in a series of papers \cites{M,MZ,mst,crass,mst2,MT}.

At the same time, Sibony introduced and studied (\cites{Sib_pf,Sib,Sib_pseudo})
the idea of Levi current: a positive current of bidimension $(1,1)$
which is $\de\debar$-closed
and vanishes when wedged with $\de\debar u$, for $u$ a plurisubharmonic function;
in a previous paper \cite{BM}, we investigated the relation between Levi currents,
local maximum sets and the kernel of a weakly complete space \cite{ST}. It is quite
clear that local maximum sets are, as the name suggests, just sets, whereas currents
imply more structure; in fact, while it is clear that the support of a Levi current is a
local maximum set and while it is possible, given a local maximum set, to produce
a Levi current with support contained in it, we do not know 
yet whether it is true that every local maximum set \emph{is} the support of a Levi current.

It is interesting to note that, already in \cite[Remark 1.5 - (i)]{BrSib},
 the authors noted the link between local maximum sets and pluriharmonic positive currents.

Again in the work
\cite{Sib}, Sibony also briefly mentioned Liouville currents, whose
definition is similar to the one of Levi current, but in the last property: one asks
that $T\wedge \de\debar u=0$ for every $u$ plurisubharmonic and bounded;
as Levi currents are related to the kernel of a weakly complete space,  Liouville currents
could be linked to the core of a complex space, as defined and investigated by
Harz, Shcherbina, and Tomassini in \cites{HST1,HST2,HST3},
which is the set of points where no bounded plurisubharmonic function can be
strictly plurisubharmonic.

In a recent paper
\cite{Sl_pseudo}, Slodkowski showed that the core is a union of ``primitive" sets,
which are 1-pseudoconcave and enjoy a Liouville property, namely every bounded
plurisubharmonic function is constant on them; this result had been previously
obtained for complex surfaces in \cite{HST2} and was also independently
proved in the general case by Poletski and Shcherbina in \cite{PS}. Slodkowski proved
this by tackling a more general problem, with respect to a family of
plurisubharmonic functions, satisfying some given properties, called admissible class.

\medskip

In the present paper, we intend to deepen the investigation of the
relations between plurisubharmonic functions (and local maximum sets) and
positive currents of bidimension $(1,1)$ (as generalizations of complex analytic varieties or Levi-flat sets),
by expanding the results of \cite{BM}: 
we introduce a generalization of Levi currents
(the $\mathcal F$-currents, see Definition \ref{def-Levic}),
where the condition $T\wedge \de\debar u $ should hold for $u$ in
an \emph{admissible class} $\mathcal F$, 
as defined by Slodkowski in \cite{Sl_pseudo}, see Definition \ref{defi-admissible-class}.
To any such class, we can also associate a \emph{singular locus},
 as the set where no element
of the class can be strictly plurisubharmonic.
Examples of admissible classes, and of the corresponding singular sets,
are given by all plurisubharmonic function (and the \emph{minimal kernels},
as introduced in \cite{ST}),
or by all bounded plurisubharmonic functions (and the \emph{cores}, as introduced in \cite{HST1}).

The following is our main result.

\begin{teorema}\label{teo-main-singular-currents}
Let $X$ be a complex manifold
and let $\mathcal F$ be an admissible class.
\begin{enumerate}
\item All $\mathcal F$-currents are supported in the singular locus of $\mathcal F$.
If  $\mathcal F$
 contains an exhaustion function,
  the singular locus is empty if and only if there are no $\mathcal F$-currents.
  \item 
  All elements of $\mathcal F$
are constant on the supports of extremal $\mathcal F$-currents.
\item There exists an $\mathcal F$-current whose support 
 is equal to the union of 
 the supports of all $\mathcal F$-currents. 
  \item Assume that $\mathcal F(X)$
contains an exhaustion function. Let $T$ be a $\mathcal F$-current.
If
$\spt T$ is compact, then
it is a local maximum set.
\item Assume that $K \subset X$ is an $\mathcal F$-component,
or
a compact local maximum set.
 Then there exists
$T \in \hat {\mathcal F}$
such that $\spt T \subseteq K$.
\end{enumerate}
\end{teorema}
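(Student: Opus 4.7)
Parts (1), (2) and (3) should reduce to fairly direct arguments. For (1), at any $x\notin\Sing(\mathcal F)$ there is $u\in\mathcal F$ with $\partial\bar\partial u$ strictly positive on a neighborhood of $x$; positivity of $T$ together with $T\wedge\partial\bar\partial u=0$ then forces $T\equiv 0$ near $x$, so $\spt T\subseteq\Sing(\mathcal F)$. The equivalence in the second sentence is then one direction trivial, and the other obtained by combining it with (5), applied to a compact local maximum set sitting inside $\Sing(\mathcal F)$ (extractable via Slodkowski's structure of primitive sets and the sublevels of the exhaustion). For (3), a Lindelöf argument gives a countable family $\{T_n\}$ of $\mathcal F$-currents whose supports exhaust $\bigcup_T\spt T$; after normalizing so that $T_n$ has summable mass against a fixed exhaustion of $X$, the sum $T=\sum_n 2^{-n}T_n$ converges as a positive current, remains $\partial\bar\partial$-closed, still annihilates $\partial\bar\partial v$ for every $v\in\mathcal F$, and has the prescribed support.

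For (2), I would test $T$ against convex compositions $\chi\circ u$, which are still in $\mathcal F$ by admissibility. Expanding
\[
\partial\bar\partial(\chi\circ u)=\chi'(u)\,\partial\bar\partial u+\chi''(u)\,\partial u\wedge\bar\partial u
\]
and wedging with $T$, the first term vanishes and one obtains the ``Cauchy--Schwarz'' identity $T\wedge\partial u\wedge\bar\partial u=0$. This identity implies that for every bounded $\phi=\phi(u)$ the positive current $\phi T$ is again $\partial\bar\partial$-closed and annihilates $\partial\bar\partial v$ for all $v\in\mathcal F$, i.e.\ is itself an $\mathcal F$-current. Extremality of $T$ then forces $\phi(u)T=\lambda_\phi T$, so $\phi\circ u$ is $T$-a.e.\ constant on $\spt T$; letting $\phi$ vary, $u$ is constant there.

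The main obstacle I anticipate is (4), because the vanishing $T\wedge\partial\bar\partial v=0$ is postulated only for $v\in\mathcal F$, whereas the local maximum property has to be checked against \emph{arbitrary} plurisubharmonic $v$ defined near $\spt T$. The plan is to use the global exhaustion $\rho\in\mathcal F(X)$ together with the compactness of $\spt T$ to promote any such $v$ to a global element of $\mathcal F$: on a relatively compact neighborhood $U$ of $\spt T$, replace $v$ by a regularized maximum of $\epsilon v+C$ with $\rho$, producing a globally defined psh function which coincides with $\epsilon v+C$ on a neighborhood of $\spt T$ and which lies in $\mathcal F$ by the closure properties built into the definition of admissibility. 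Testing $T$ against this upgraded function and restricting to a neighborhood of $\spt T$ yields $T\wedge\partial\bar\partial v=0$ there for every psh $v$, at which point the classical argument used for Levi currents in \cite{BM} applies to conclude that $\spt T$ is a local maximum set.

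Finally, (5) I would handle by a Hahn--Banach construction in the style of Sibony. Consider, on a neighborhood of $K$, the convex cone $\{\partial\bar\partial u:u\in\mathcal F\}$ of $(1,1)$-forms; the hypothesis on $K$ (either the local maximum property or the primitivity of an $\mathcal F$-component, in the sense of Slodkowski) is precisely what rules out this cone dominating a smooth strictly positive form supported near $K$. A separation argument then produces a positive, $\partial\bar\partial$-closed current $T$ that annihilates the cone and is supported in $K$, and this $T$ is the desired element of $\hat{\mathcal F}$.
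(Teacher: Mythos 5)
Your outline is essentially correct and, for items (2), (3), (4) and (5), follows the same architecture as the paper: the extremality argument via nonnegative compositions $h(u)T$ is exactly Proposition 4.3; the countable-family-plus-weighted-sum construction is Proposition \ref{p:total-support} (the paper gets the countable family from separability of the closed set $\hat{\mathcal F}$ rather than from Lindel\"of on the supports, an immaterial difference); and your gluing of a local psh function with the exhaustion via a regularized maximum, using the cone, convexity and gluing axioms to land back in $\mathcal F$, is precisely the mechanism of Proposition \ref{p:support-locmax}(1). The one place where you genuinely diverge is the implication ``no $\mathcal F$-currents $\Rightarrow$ empty singular locus'': you propose to deduce it from item (5) applied to an $\mathcal F$-component (or a compact local maximum piece) inside $\Sigma^{\mathcal F}$, using Slodkowski's decomposition. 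This is a legitimate shortcut, but note that the paper's Proposition \ref{prop-empty} proves more: it assembles, via Lemma \ref{lemma:claim-prop-empty} on an exhausting sequence of compacts and axiom (A1), a single \emph{globally strictly psh exhaustion belonging to $\mathcal F(X)$}, which your route does not produce. For (4), your two-step reduction (first upgrade $T\wedge i\partial\bar\partial v=0$ to all psh $v$ near $\spt T$, then quote \cite{BM}) is fine, but is really the same computation done twice, since the proof in \cite{BM} consists of the very gluing you perform in step one, applied to the function furnished by Proposition \ref{p:equivalence-locmax}.

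The thinnest point is item (5) for a (possibly non-compact) $\mathcal F$-component $K$. For a \emph{compact local maximum set}, ``the hypothesis rules out a strictly psh function near $K$'' is indeed elementary (subtract a small $\delta|z-x^*|^2$ at a maximum point of the putative function on $K$ and contradict Definition \ref{def:locmax}). For an $\mathcal F$-component it is not: a single strictly psh $u_0\in\mathcal F(U)$ near $K$ is not immediately incompatible with primitivity, because $u_0$ need not be constant on $K$ and its level sets are not obviously small. The paper's proof needs here the perturbation axiom \ref{def-perturb-t} to manufacture $2n$ elements $v_0,\dots,v_{2n-1}$ of $\mathcal F(U)$ with independent differentials at a point $x_0$, so that their common level set is locally $\{x_0\}$, and then \cite[Corollary 1.11]{Sl_pseudo}, which provides a compact $1$-pseudoconcave subset of $K$ on which all the $v_j$ are constant --- a set that cannot be a single point. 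Your sketch gestures at ``primitivity in the sense of Slodkowski'' but does not supply this perturbation step, which is the one genuinely non-formal ingredient; also, the separating current you obtain need not be arranged to ``annihilate the cone'' by hand, since for a positive $\partial\bar\partial$-closed current supported on a compact set or on an $\mathcal F$-component condition \ref{def-Levic-wedge-ddc} is automatic by Lemma \ref{lemma-14ok}. With these two points supplied, your argument closes.
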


The paper is organized as follows. In 
Section \ref{s:admissible} we define admissible classes and their
associated singular loci,
and we state the properties that we need in the sequel. In Section \ref{s:currents}
we introduce the notion of $\mathcal F$-currents, and study such currents and their supports.
The proof of Theorem \ref{teo-main-singular-currents} is then given in Section \ref{s:proof-thm}.
We conclude the paper with some remarks about the decomposition of $\mathcal F$
on the levels sets of elements of $\mathcal F$, and on the localization of the definitions in the paper to the case of compact subsets,
see Section \ref{s:final-remarks}.

\bigskip

\paragraph{\bf Acknowledgements}
We are indebted to Nessim Sibony and Zbigniew Slodkowski
for discussions about our previous work \cite{BM}, which lead to the current paper.

This project has received funding from
 the French government through the Programme Investissement d'Avenir
 (I-SITE ULNE /ANR-16-IDEX-0004,
 LabEx CEMPI /ANR-11-LABX-0007-01,
ANR QuaSiDy /ANR-21-CE40-0016,
ANR PADAWAN ANR-21-CE40-0012-01)
managed by the Agence Nationale de la Recherche.

\section{Admissible families and singular loci}
\label{s:admissible}

In this section, following \cite{Sl_pseudo},
we define admissible class
and the associated singular loci, 
 and we recall their properties that we will need in the sequel.
We fix a complex manifold $X$.
We say that an open subset $U \subset X$ is \emph{allowable} if it is either
relatively compact in $X$ or cocompact. Notice in particular that  $X$ is allowable.

\begin{defin}\label{defi-admissible-class}
An \emph{admissible class} $\mathcal F$
is the datum, for every allowable open set $U \subset X$, of a family $\mathcal F (U)$
of continuous plurisubharmonic functions satisfying the following properties
for every allowable sets $W,U,U_1, \dots U_m$:
	\begin{enumerate}[label={\bf (A\arabic*)}]
\item for every sequence $\phi_n\in \mathcal F(X)$, there exists a sequence 
of positive
$\epsilon_n \in \mathbb R$ such that
the series
 $\sum_{n=1}^\infty \epsilon_n \phi_n$ converges locally uniformly to an element of $\mathcal F (X)$;
\item whenever $\phi \in \mathcal F (U)$ and $W \subset U$,
 then $\phi_{|W} \in \mathcal F (W)$;
\item\label{def-glue} if $\{U_1, \dots U_m\}$ is a finite cover of $X$
 and $\phi \colon X\to\mathbb R$ is such that $\phi_{|U_i}\in \mathcal F(U_i)$
 for all $1\leq i \leq m$, then $\phi \in \mathcal F(X)$;
\item\label{def-cone-bounded}
  the set $\mathcal F (U)$ is a convex cone and contains all bounded
 $\Ci^\infty$ plurisubharmonic functions on $U$;
\item\label{def-convex}
  for every 
 $\phi_1, \dots \phi_m\in \mathcal F (U)$, and every 
 $\Ci^\infty$ convex function 
 $v\colon \mathbb R^m \to \mathbb R$
 of at most linear 
 growth 
 and such that $\frac{\partial v}{\partial t_i}\geq 0, i=1, \dots, m$
 on the joint range
 of $(\phi_m, \dots, \phi_m)$,
  the
 function 
 $\phi:= v(\phi_1, \dots, \phi_m)$
 belongs to $\mathcal F(U)$;
\item\label{def-perturb-t}
 for every $\phi \in \mathcal F(U)$
 which is strongly plurisubharmonic on $U$
 and $\rho \in \Ci^\infty (U)$
 with $\overline {\spt \rho} \subset U$, there exists $t>0$ such that
 $\phi + t \rho \in \mathcal F(U)$.   
\end{enumerate}
\end{defin}

\begin{rem}\label{rmk:a7}
Observe that condition \ref{def-cone-bounded} implies the following property:
\begin{enumerate}[label={\bf (A\arabic*)}]
\setcounter{enumi}{6}
\item\label{def-new-dense}
every point $p\in X$ admits
an open neighbourhood $\Omega$
  such that, for all allowable
  open subset
   $V\subset \Omega$, 
  $\mathcal F (V)\cap \Ci^\infty$ is dense
(for the topology of locally uniform convergence)  
   in $\mathcal F(V)$
   \end{enumerate}
  We will use this property a number of times in the following,
  hence we prefer to add it to the list of properties of an admissible class.
   \end{rem}

\begin{rem}
In general, an allowable class $\mathcal F$ is 
not
closed by maximum, i.e., the maximum of two elements in
a given class $\mathcal F$
is not necessarily in $\mathcal F$. On the other hand,
by the condition \ref{def-convex}, the class is closed by
$\max_\epsilon$ for all positive $\epsilon$, where
$\max_{\epsilon} (x,y)$ is any smooth approximation of the function $\max (x,y)$.
\end{rem}

\begin{defin}
The \emph{singular locus} of 
an admissible
 class $\mathcal F$ is the complement $\Sigma^{\mathcal F} = \Sigma^{\mathcal F}_X$ of the 
 set of points $x\in X$ such that there exists $\phi \in \mathcal F(X)$
 which is strongly plurisubharmonic at $x$.
\end{defin}

\begin{defin}
An $\mathcal F$-component is an equivalence class of the relation $\sim$ defined as follows:
$x\sim y$ if $\phi(x) = \phi(y)$ for all $\phi \in \mathcal F$. 
\end{defin}

Observe that this relation is meaningful mostly on $\Sigma^{\mathcal F}$. Indeed,
the component of a point outside of $\Sigma^{\mathcal F}$ is given by that single point.

\medskip

The following are examples of 
allowable families, giving rise to well-studied
singular sets,
 see \cite{Sl_pseudo}*{Section 5} for more details.

\begin{ex}\label{ex:kernel}
Let us consider, for a given $k \in\mathbb N \cup\{\infty\}$,
 the class $\mathcal F$ defined as follows:
\begin{itemize}
\item 
the set of all lower-bounded, $\Ci^k$ psh functions on $U$, for all relatively compact open set $U$, and
\item the set of all lower-bounded, $\Ci^k$ psh functions $\phi$ on $U$, such that the sub-levels sets
$\{\phi \leq c\}$ are relatively compact in $X$ for all $c \in \mathbb R$,
for all cocompact open set $U$.
\end{itemize}

The singular loci associated to $\mathcal F$ as above are the \emph{minimal kernels},
as introduced and studied in \cites{ST,mst}; in particular, in the series of paper \cites{mst, crass, mst2, MZ, M, MT}, the authors considered the case of complex surfaces such that $\mathcal{F}(X)$ contains a real analytic exhaustion. As a consequence of this detailed study, one notices that, in such a case, the singular locus does not depend on the regularity $k$.
\end{ex}

\begin{ex}\label{ex:core}
Let us now consider, for a given $k \in\mathbb N\cup\{\infty\}$ 
 the class $\mathcal F$ given, on any admissible open set $U$, 
 by all uniformly bounded $\Ci^k$ plurisubharmonic functions.

The singular loci in this case correspond to the \emph{cores}, as introduced and studied 
in \cites{HST1,HST2,HST3}. It is known that, in the case of cores, regularity plays a role (see \cite{H}).
\end{ex}

In the remaining part of this section, we summarize the
main properties of singular loci, mainly from \cite{Sl_pseudo},
 that we will need in the sequel. We first need to recall the following further definitions.

\begin{defin}
Given an admissible class $\mathcal F$, an
 element $\phi \in \mathcal F(X)$ is a $\mathcal F$-\emph{minimal function}
 if it is strongly plurisubharmonic on $X \setminus \Sigma_X^{\mathcal F}$.
\end{defin}

\begin{defin}\label{def:locmax}
Let $Z$ be a locally closed set. We say that  $Z$ is a local maximum set if every $x\in Z$ 
admits a neighbourhood $V$  with the following property: for every compact set $K\subset V$
and every function $\phi$ which is psh in a neighbourhood of $K$,  we have
\begin{equation}\label{eq:locmax}
\max_{Z \cap K}
\psi
  = \max_{Z\cap bK} \psi.
  \end{equation}
\end{defin}

\begin{rem}\label{rem-local-max-set-equiv}
More generally, given ad admissible class $\mathcal F$
as in Definition \ref{defi-admissible-class}, one may
say that $Z$ is a local maximum set for 
   $\mathcal F$ if the condition in Definition
   \ref{def:locmax} is satisfied for all $\phi \in \mathcal F (V)$, where
$V$  is an allowable open neighbourhood of $Z$.
  However, it is not difficult to see that this is actually equivalent
  to be a local maximum set.
 It is clear that any local maximum set is a local maximum set for $\mathcal F$. On the other hand, take
  let $Z$ be a  local maximum set  for $\mathcal F$.
 Take any point of $Z$, an open neighbourhood $U$
   and a compact set $K\subset U$.
  We can, without loss of generality, reduce $U$ and assume that it is relatively compact, hence admissible.
Take a psh function $\phi$
on a neighbourhood of $K$. Again, without loss of generality, we can assume that
$\phi$ is defined on $U$. If $\phi$ is smooth, 
  by condition \ref{def-cone-bounded} in Definition \ref{defi-admissible-class}, $\phi$ belongs to 
$\mathcal F (U)$. Hence \eqref{eq:locmax} holds for $\phi$. The statement for a general upper semicontinuous plurisubharmonic
$\phi$ now follows
by
approximation.
\end{rem}

\begin{teorema}[Theorem 4.2 of \cite{Sl}]
Let $X$ be a $n$-dimensional complex manifold.
 A closed set $Z$ is a local maximum set if and only if it is \emph{1-pseudoconcave}: it can be covered
 by open sets $V_i$ such that $V_i \setminus Z$ admits a  $(n-2)$-plurisubharmonic exhaustion function.

\end{teorema}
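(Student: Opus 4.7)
The plan is to prove the two implications separately, pairing a Kontinuit\"atssatz-type extension argument in one direction with a direct construction of a generalized plurisubharmonic exhaustion in the other.

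For the easier direction, that $1$-pseudoconcavity implies the local maximum property, suppose that $V\setminus Z$ carries an $(n-2)$-psh exhaustion $\rho$. Fix a compact $K\subset V$ and a function $\psi$ psh in a neighbourhood of $K$, and assume by contradiction that $\max_{Z\cap K}\psi > \max_{Z\cap bK}\psi$, attained at some interior $x_0\in Z\cap \Int K$. The sublevel sets $\{\rho<c\}$ are $(n-2)$-complete open subsets of $V$ whose topological boundary touches $Z$; on such domains one has a Hartogs-type extension principle for families of analytic discs of dimension $2$, i.e., a Kontinuit\"atssatz for $2$-dimensional analytic sets whose boundaries are swept in $\{\rho<c\}$. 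I would use this to build, around $x_0$, a Hartogs figure whose boundary shell lies in a region where $\psi$ is strictly less than $\psi(x_0)$ and whose base touches $Z$ only very close to $x_0$; the sub-mean-value inequality along the $2$-dimensional analytic fiber through $x_0$ then contradicts the assumption that $\psi(x_0)$ is a strict local maximum on $Z$.

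For the converse, that a local maximum set $Z$ admits a $1$-pseudoconcave complement, I would construct an $(n-2)$-psh exhaustion of $V\setminus Z$ locally around a chosen $x_0\in Z$. A natural candidate is a suitable regularization of
\[
\psi(z) = -\log \mathrm{dist}(z,Z) + A\,\|z-x_0\|^2,
\]
for $A$ large; this is automatically exhausting on a small punctured neighbourhood and, thanks to the quadratic term, controllably strictly psh in many directions. The nontrivial point is to check that at each $z\in V\setminus Z$ the Levi form has at most $n-2$ negative eigenvalues. Here is where the local maximum property is used: if the Levi form of $\psi$ at some $z$ had fewer than two nonnegative eigenvalues, then on a suitable complex $2$-plane $\Pi$ through $z$ the restriction $-\log \mathrm{dist}(\cdot,Z)|_\Pi$ would be strictly superharmonic, which via a standard inversion gives a small analytic disc in $\Pi$ whose boundary lies outside $Z$ while its interior meets $Z$ at a point of strict maximum of an auxiliary psh function built from the distance; this contradicts the local maximum property at the nearest point of $Z$.

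The principal obstacle is precisely this second direction: distance-type functions to a general closed set are only Lipschitz, the nearest-point projection need not be single-valued or smooth, and controlling the signature of the Levi form requires carefully chosen smoothings and a judicious choice of complex $2$-planes adapted to the local geometry of $Z$. This is the technical core of Slodkowski's argument in \cite{Sl}, and in a self-contained write-up one would likely invoke, at this point, the Andreotti--Grauert machinery or a direct maximum-principle argument on germs of $2$-dimensional analytic subsets of $V\setminus Z$ to certify $(n-2)$-plurisubharmonicity.
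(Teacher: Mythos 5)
The paper does not prove this statement at all: it is imported verbatim from Slodkowski (Theorem 4.2 of \cite{Sl}) and used as a black box, so there is no internal proof to compare yours against; in the context of this paper the correct move is simply the citation. Judged on its own merits, your proposal is a reasonable roadmap --- in particular $-\log\mathrm{dist}(\cdot,Z)$ corrected by a quadratic term is indeed the exhaustion one uses for the hard direction --- but neither implication is actually established, and the first one contains a step that fails as stated.

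In the direction ``$1$-pseudoconcave $\Rightarrow$ local maximum set'', your concluding move is to apply a sub-mean-value inequality along a $2$-dimensional analytic fiber through $x_0$. This bounds $\psi(x_0)$ by values of $\psi$ at points that in general do \emph{not} lie on $Z$, and such a bound says nothing about $\max_{Z\cap K}\psi$ versus $\max_{Z\cap bK}\psi$: a strictly plurisubharmonic $\psi$ never has an interior maximum along any analytic set, yet this is exactly the configuration one must exclude \emph{on} $Z$. Note also that the exhaustion $\rho$ never actually enters your final contradiction, and that the dimension in your Kontinuit\"atssatz is suspect: an $(n-2)$-plurisubharmonic function is only guaranteed to satisfy the maximum principle on analytic sets of dimension at least $n-1$, not $2$ (these agree only for $n=3$). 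The arguments that work run either through the boundary maximum principle for generalized $q$-plurisubharmonic functions (combining $\psi$, which is $0$-psh, with $\rho$ via Slodkowski's addition theorem to violate the maximum principle on a bounded open set), or through the Levi polynomial of the normalized strictly psh function produced by Lemma 2.2 of \cite{Sl} (cf.\ Proposition \ref{p:equivalence-locmax} here), whose level sets give a degenerating family of analytic sets with boundaries off $Z$ but with limit touching $Z$ at $x_0$. In the converse direction you have the right candidate function, but --- as you yourself acknowledge --- the entire content of the theorem, namely that a failure of $(n-2)$-plurisubharmonicity of $-\log\mathrm{dist}(\cdot,Z)$ at an exterior point (necessarily interpreted in the viscosity sense via $\mathcal{C}^2$ functions touching from above, since the distance is merely Lipschitz) yields a plurisubharmonic function violating the local maximum property at a nearest point of $Z$, is deferred to ``Slodkowski's technical core''. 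So what you have is an outline of the known strategy, with one incorrect step, rather than a proof.
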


Recall that, for a $\mathcal{C}^2$ function, to be $(n-2)$-plurisubharmonic means
that its complex Hessian has at least $2$ non-negative eigenvalues.

\medskip

The next result gives a
 characterization of local maximum sets
 in terms of the local behaviour of admissibile functions.
 Although, by Remark \ref{rem-local-max-set-equiv}, such result is implied
by \cite[Proposition 2.3]{Sl},
we give here a proof of this, to show how the definition of
admissible classes
precisely allows one to work as if doing so 
in the
algebra of psh functions.

\begin{propos}\label{p:equivalence-locmax}
Let $Z$ be a locally closed set and $\mathcal F$
be an admissible class.
The following conditions are equivalent.
\begin{enumerate}
\item $Z$ is a local maximum set for the class $\mathcal F$;
\item there do not exist $z^*\in Z$, $r>0$, $\epsilon >0$ and a strictly psh function $u$
in $\mathcal F (B(z^*,r))$ such that $u(z^*)=0$ and $u(z)\leq -\epsilon |z-z^*|^2$ for $z \in Z \cap B(z^*,r)$.
\end{enumerate}
\end{propos}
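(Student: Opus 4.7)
The plan is to prove the two implications separately.

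$(1) \Rightarrow (2)$: by contradiction. Suppose $u, z^*, r, \epsilon$ as in $(2)$ exist. For any $r' \in (0, r)$, set $K := \overline{B(z^*, r')}$. Then $u(z^*) = 0$, whereas $u(z) \leq -\epsilon (r')^2 < 0$ for $z \in Z \cap \partial K$; hence $\max_{Z \cap K} u > \max_{Z \cap \partial K} u$. Since $B(z^*, r)$ is an allowable open neighbourhood of $K$ and $u \in \mathcal F(B(z^*, r))$, this violates the local maximum property of $Z$ with respect to $\mathcal F$, contradicting $(1)$.

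$(2) \Rightarrow (1)$: I argue the contrapositive. Suppose $Z$ is not a local maximum set for $\mathcal F$. Then there exist $z_0 \in Z$, an allowable open neighbourhood $V \ni z_0$ contained in a neighbourhood $\Omega$ given by \ref{def-new-dense}, a compact $K \subset V$ with $z_0 \in \mathrm{int}\,K$, and $\phi \in \mathcal F(V)$ such that $\phi(z_0) = \max_{Z \cap K}\phi > \max_{Z \cap \partial K} \phi$. By \ref{def-new-dense}, there is a sequence of smooth elements of $\mathcal F(V)$ converging to $\phi$ locally uniformly; since both $\max_{Z\cap K}$ and $\max_{Z\cap\partial K}$ depend continuously on uniform convergence on $K$, for a close enough approximant the strict inequality on maxima persists. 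Relabel, so that $\phi$ may be assumed smooth and still attains its maximum on $Z \cap K$ at an interior point (still called $z_0$).

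Set $\phi_\eta := \phi + \eta |z - z_0|^2$ for small $\eta > 0$: by \ref{def-cone-bounded}, $\phi_\eta \in \mathcal F(V)$, and $\de\debar \phi_\eta \geq \eta I$. If $\eta \cdot \mathrm{diam}(K)^2 < \phi(z_0) - \max_{Z \cap \partial K}\phi$, then $\phi_\eta(z_0) = \phi(z_0)$ still exceeds $\max_{Z \cap \partial K}\phi_\eta$, so the maximum of $\phi_\eta$ on $Z \cap K$ is attained at some interior point $z^* \in Z \cap \mathrm{int}\, K$. Taylor-expanding the smooth, strictly plurisubharmonic $\phi_\eta$ at $z^*$,
\[
\phi_\eta(z^* + h) - \phi_\eta(z^*) \;=\; v(h) + \de\debar\phi_\eta(z^*)(h, \bar h) + o(|h|^2),
\]
where $v(h) := 2\Re\bigl(\de \phi_\eta(z^*)(h)\bigr) + \Re\bigl(\de^2\phi_\eta(z^*)(h,h)\bigr)$ is the real part of the holomorphic $2$-jet of $\phi_\eta$ at $z^*$, hence pluriharmonic. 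Since $\phi_\eta(z^* + h) \leq \phi_\eta(z^*)$ for $z^* + h \in Z$ near $z^*$ and $\de\debar\phi_\eta(z^*) \geq \eta I$, we get $v(h) \leq -(\eta/2)|h|^2$ for $z^* + h \in Z$ and $|h| < r$, for a suitably small $r > 0$. Setting
\[
u(z) \;:=\; v(z - z^*) + (\eta/4)|z - z^*|^2,
\]
we obtain a smooth function with constant complex Hessian $(\eta/4)I$, hence strictly plurisubharmonic; $u(z^*) = 0$; and $u(z) \leq -(\eta/4)|z - z^*|^2$ on $Z \cap B(z^*, r)$. Both $v$ and the quadratic are bounded, smooth, and plurisubharmonic on $B(z^*, r)$, so $u \in \mathcal F(B(z^*, r))$ by \ref{def-cone-bounded} together with the convex cone property of $\mathcal F$. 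This contradicts $(2)$.

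The main obstacle is passing from the qualitative statement ``$\phi$ has an interior maximum on $Z$'' to the quantitative quadratic upper bound demanded by $(2)$. The device is to first turn $\phi$ into a strictly plurisubharmonic function via the small quadratic perturbation $\eta|z-z_0|^2$; then the maximum condition at the new maximum point $z^*$, combined with Taylor's formula, automatically produces the quadratic decay of the pluriharmonic Taylor part $v$. The function $u$ is then assembled from $v$ and an additional small strictly plurisubharmonic quadratic, both of which remain in $\mathcal F$ by the axioms.
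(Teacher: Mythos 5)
Your proof is correct, and while its skeleton matches the paper's (implication $(1)\Rightarrow(2)$ by direct contradiction; $(2)\Rightarrow(1)$ by contrapositive, reducing to a smooth $\phi$ via \ref{def-new-dense} and then manufacturing a strictly psh function vanishing at an interior maximum point of $Z$ and decaying quadratically on $Z$), the key technical step is done genuinely differently. The paper outsources that step to Lemma 2.2 of \cite{Sl}, which adds a strictly convex function $f$ to $u_0$ and is stated for merely upper semicontinuous functions; you instead prove what is needed by hand: perturb by $\eta|z-z_0|^2$ to make the Levi form uniformly positive, pass to the new interior maximum point $z^*$, and split the second-order Taylor polynomial of $\phi_\eta$ at $z^*$ into its pluriharmonic $2$-jet $v$ plus the Levi form, so that the maximum condition forces $v(h)\leq -(\eta/2)|h|^2$ on $Z$ near $z^*$. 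Your route is self-contained (no external lemma) and has the pleasant feature that the final $u$ is an explicit bounded smooth psh polynomial, so $u\in\mathcal F(B(z^*,r))$ follows from \ref{def-cone-bounded} alone --- in fact you never really need $\phi_\eta\in\mathcal F(V)$; the price is that it requires $\mathcal C^2$ regularity, which the density axiom supplies. Two small points to tidy: shrink $V$ so that $Z\cap V$ is closed in $V$ (possible since $Z$ is locally closed), which makes $Z\cap K$ compact and guarantees that the suprema of $\phi$ and $\phi_\eta$ on $Z\cap K$ are actually attained; and record that $z^*+h\in K$ for $|h|$ small, which holds because $z^*\in\mathrm{int}\,K$.
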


\begin{proof}
First, it is clear that the existence of a function $u$
as in the second item contradicts the fact that $Z$ is a local maximum set for the class $\mathcal F$. 

For the other implication, suppose
that $Z$ is not a local maximum set for the class $\mathcal F$. Then there exists a compact
subset $K \subset Z$, an allowable open
neighbourhood $U$
of $K$, and an element $u_0 \in \mathcal F (U)$
such that
\[
\max_{K \cap Z} u_0 > \max_{\partial K \cap Z} u_0.
\]
By the density of smooth elements in $\mathcal F (U)$,
 we can assume that $u_0 \in \Ci^2(U)$.
Then,
\cite[Lemma 2.2]{Sl}
gives the existence
of a strictly convex function $f\colon U \to \mathbb R$
and a point $x^* \in K \setminus \partial K$
such that 
\[
(u_0+f)(x^*)=0 \quad \mbox{ and } \quad
(u_0+f)(x)\leq -\epsilon |x-x^*|^2 \mbox{ for } x\in K. 
\]
(the lemma is stated in $\mathbb C^n$ --
and actually $\mathbb R^n$, just for upper semicontinuous functions --
but the construction is local). By taking $r$ sufficiently small, the function $u:= u_0+f$
is strictly psh on $B(x^*,r)$ and satisfies the requirements in the statement.
\end{proof}

\begin{teorema}[\cite{Sl_pseudo}]\label{teo-summary-sl}
Let $\mathcal F$ be an admissible
class.
\begin{enumerate}
\item there exists a minimal function in $\mathcal F$;
\item the singular locus $\Sigma^{\mathcal F}$ is a local maximum set (hence 1-pseudoconcave), or empty;
\item all $\mathcal F$-components of points in $\Sigma^{\mathcal F}$ are 1-pseudoconcave;
\item if $x\not\in\Sigma^{\mathcal F}$, then the $\mathcal{F}$-component containing $x$ is $\{x\}$.
\end{enumerate}
\end{teorema}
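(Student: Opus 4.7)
\medskip
\noindent\textbf{Proof strategy.}
The four items are proved in order, each building on the previous.

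For (1) my plan is a second-countability (Lindel\"of) argument. For every $p\in X\setminus\Sigma^{\mathcal F}$ pick $\phi_p\in\mathcal F(X)$ strongly plurisubharmonic on an open neighbourhood $U_p$ of $p$; extract a countable subcover $\{U_{p_k}\}_{k\in\mathbb N}$ of $X\setminus\Sigma^{\mathcal F}$; and apply axiom \textbf{(A1)} to $\{\phi_{p_k}\}$ to obtain $\Phi=\sum_{k\geq 1}\epsilon_k\phi_{p_k}\in\mathcal F(X)$. Since each summand is psh and $\phi_{p_k}$ is strongly psh on $U_{p_k}$, the sum $\Phi$ is strongly psh on $\bigcup_k U_{p_k}=X\setminus\Sigma^{\mathcal F}$, yielding a minimal function.

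For (2) and (3) the strategy is to argue by contradiction via Proposition \ref{p:equivalence-locmax}. If $\Sigma^{\mathcal F}$ (respectively, an $\mathcal F$-component $K$) failed to be a local maximum set, the proposition would furnish a point $z^*$, a radius $r>0$, an $\epsilon>0$, and a strictly psh $u\in\mathcal F(B(z^*,r))$ with $u(z^*)=0$ and $u(z)\leq-\epsilon|z-z^*|^2$ on $\Sigma^{\mathcal F}\cap B(z^*,r)$ (resp.\ on $K\cap B(z^*,r)$). Taking the minimal function $\phi_0$ from (1), the function $\phi_0+tu$ lies in $\mathcal F(B(z^*,r))$ for small $t>0$ by \textbf{(A4)}--\textbf{(A5)} and is strictly psh at $z^*$; patching $\phi_0+tu$ with a suitable vertical translate of $\phi_0$ via a regularised max, and gluing along a transition annulus using \textbf{(A3)}, should produce $\tilde\phi\in\mathcal F(X)$. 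In~(2), this $\tilde\phi$ is strictly psh at $z^*$, contradicting $z^*\in\Sigma^{\mathcal F}$. In~(3), the inequality $u(w)<0=u(z^*)$ for some $w\in K$ near $z^*$ translates, after patching, into $\tilde\phi(z^*)\neq\tilde\phi(w)$, contradicting $K$ being an $\mathcal F$-component, since every element of $\mathcal F(X)$, in particular $\tilde\phi$, is by definition constant on $K$.

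For (4) my plan is a direct perturbation using axiom \textbf{(A6)}. Given $x\notin\Sigma^{\mathcal F}$ and $y\neq x$, pick a relatively compact neighbourhood $U\ni x$ with $y\notin\bar U$ and some $\phi\in\mathcal F(X)$ strongly psh on $U$; choose $\rho\in\Ci^\infty(U)$ with $\overline{\spt\rho}\subset U$ and $\rho(x)=1$. Axiom \textbf{(A6)} yields $t>0$ such that $\phi+t\rho\in\mathcal F(U)$; extending by $\phi$ on $X\setminus\overline{\spt\rho}$ and applying \textbf{(A2)}--\textbf{(A3)} to the cover $\{U,X\setminus\overline{\spt\rho}\}$ (both members of which are allowable) gives $\psi:=\phi+t\rho\in\mathcal F(X)$. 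Now $\psi(x)=\phi(x)+t$ while $\psi(y)=\phi(y)$, so either $\phi$ already separates $x$ from $y$, or $\psi$ does; either way the $\mathcal F$-component of $x$ collapses to $\{x\}$.

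The main obstacle will be the gluing in (2) and (3): one must balance the scaling parameter $t$ and the constant added to $\phi_0$ so that the regularised max of $\phi_0+tu$ on $B(z^*,r)$ and a translate of $\phi_0$ on the complement of a smaller concentric ball transitions cleanly across the annulus and resolves to the correct piece near $z^*$ and far from it, while the global outcome remains in $\mathcal F(X)$ and retains the desired strict plurisubharmonicity or separation property at $z^*$. The bookkeeping in (3) is slightly more delicate because one must propagate the local strict inequality $u(w)<u(z^*)$ all the way to the patched global function $\tilde\phi$ despite $\phi_0$ being constant on $K$.
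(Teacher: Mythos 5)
First, a point of comparison: the paper does not prove this theorem at all; it is imported verbatim from Slodkowski's \emph{Pseudoconcave decompositions in complex manifolds} (\cite{Sl_pseudo}), so there is no internal proof to measure you against. On their own merits, your arguments for (1) and (4) are correct and standard: Lindel\"of plus \textbf{(A1)} gives the minimal function (the locally uniform limit inherits strong plurisubharmonicity on each $U_{p_k}$ because the tail is psh), and the \textbf{(A6)}-perturbation glued over the cover $\{U, X\setminus\overline{\spt\rho}\}$ via \textbf{(A2)}--\textbf{(A3)} separates $x$ from any fixed $y\neq x$.

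The gap is in (2) and (3), and it is not the parameter bookkeeping you describe. Proposition \ref{p:equivalence-locmax} controls $u$ only on $Z\cap B(z^*,r)$ (with $Z=\Sigma^{\mathcal F}$, resp.\ the component $K$): off $Z$, $u$ may be positive anywhere on the transition annulus, so \emph{no} choice of the scaling $t$ and the vertical shift $c$ forces $\phi_0+tu<\phi_0-c$ there, and the regularised max does not resolve to the translate of $\phi_0$ near $\partial B$; the glued function is simply not defined. For (2) this can be repaired, but only by an ingredient absent from your sketch: the bad set $A=\{u\geq-\delta\}$ intersected with the closed annulus is a compact subset of $X\setminus\Sigma^{\mathcal F}$, where the minimal function is \emph{strongly} psh, so one first uses \textbf{(A6)} (and \textbf{(A3)}) to subtract a bump $s\rho$ that is very negative on $A$, and then compares $\phi_0+s\rho+\lambda u$ with $\phi_0-\delta$. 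For (3) this repair is unavailable: $A$ lies off $K$ but may meet $\Sigma^{\mathcal F}\setminus K$, where $\phi_0$ is not strongly psh and \textbf{(A6)} cannot be invoked. This is exactly why Slodkowski proves (3) not by a direct gluing but via his pseudoconcave decomposition machinery (Theorem 1.10 and Corollary 1.11 of \cite{Sl_pseudo}: every compact $1$-pseudoconcave set contains a compact $1$-pseudoconcave subset on which all functions of the class are constant, obtained by a Zorn-type refinement of level-set partitions) --- the same Corollary 1.11 that the present paper invokes in the proof of Proposition \ref{p:support-locmax}. Your argument for (3) also tacitly assumes a point $w\in K\cap B(z^*,r)$ with $w\neq z^*$; excluding that $K$ is locally reduced to $\{z^*\}$ is part of what must be proved, not an input.
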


\begin{rem}
Items (1) and (2) in Theorem \ref{teo-summary-sl}
were proved in \cite{ST} in the case of minimal kernels (see Example \ref{ex:kernel})
and in \cite{HST1} in the case of cores (see Example \ref{ex:core}); the decomposition
in $\mathcal F$-components for cores was already proved in the $2$-dimensional case
in \cite{HST2} and extended to every dimension by Poletski and Shcherbina in \cite{PS}.
\end{rem}

\section{Generalized Levi currents}
\label{s:currents}

In this section we fix a complex manifold $X$.
 Following the definition of Levi currents by Sibony \cite{Sib},
 we define a natural generalization of this notion adapted to any admissible class as in Definition
 \ref{defi-admissible-class}.

\begin{defin}\label{def-Levic}
Let $\mathcal F$ be an admissible class. An
$\mathcal F$-current
is a current $T$
on $X$ satisfying the following properties:
\begin{enumerate}[label={\bf (C\arabic*)}]
\item\label{def-Levic-non0} $T$ is non-zero;
\item\label{def-Levic-11} $T$ is of bidimension $(1,1)$;
\item\label{def-Levic-positive} $T$ is positive;
\item\label{def-Levic-ddc} $i \partial \bar \partial T=0$;
\item\label{def-Levic-wedge-ddc} $T \wedge i \partial \bar \partial u =0$ for all $u \in \mathcal F(U)$ for $U$
an allowable neighbourhood of the support of $T$.
\end{enumerate}
We denote by $\hat {\mathcal F}$ the set of all $\mathcal F$-currents.
We say that an $\mathcal F$-current is \emph{extremal}
if $T=T_1=T_2$ whenever $T = (T_1+T_2)/2$ for $T_1, T_2$ $\mathcal F$-currents.
\end{defin}

\begin{ex}
When $\mathcal F$ is as in Example \ref{ex:kernel} (resp.\ Example \ref{ex:core}),
 we recover
  the definition of Levi (resp.\ Liouville) currents, as in \cite{Sib}.
\end{ex}

The following lemma permits to extend the definition
of the  intersections between $\mathcal F$-currents and 
some exact forms. The arguments of the proof 
are given in \cite{Sib}
and are based on a method developed in \cite{DS}.
We use here \ref{def-new-dense} in order to (locally)
approximate continuous elements of $\mathcal F$ with 
smooth ones.

\begin{lemma}\label{lemma-defi-limit}
Let $\mathcal F$ be an admissible class. Take $u, u_n \in \mathcal F$, with
$u_n$ smooth and such that $u_n \to u$, and let $T$
be a
positive closed current on $X$ of bidimension $(1,1)$. Then
the current $T \wedge \partial u$ is well defined
and
\[ 
T \wedge \partial u_n \to T \wedge \partial u.
\]
Similar assertions hold for $T \wedge \bar\partial u, T\wedge \partial u \wedge \bar \partial u$, and
$T \wedge \partial \bar \partial u$.
\end{lemma}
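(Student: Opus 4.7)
The plan is to define each of these wedge products by transferring the differential operators from $u$ onto $T$. Because $T$ is positive, it has order zero: its coefficients are Radon measures, so $uT$ is a well-defined current for every continuous function $u$, and $u_n T \to uT$ weakly whenever $u_n \to u$ locally uniformly, via the obvious bound $|\langle (u_n-u)T,\varphi\rangle|\le \|u_n-u\|_{L^\infty(\spt\varphi)}\int_{\spt\varphi} |\varphi|\,d\|T\|$. Since $T$ is closed of bidimension $(1,1)$, the bidegrees of $\partial T$ and $\bar\partial T$ are distinct, so $dT=0$ forces $\partial T=\bar\partial T=0$ separately.

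One can then set
\[
T\wedge\partial u := \partial(uT),\qquad T\wedge\bar\partial u := \bar\partial(uT),\qquad T\wedge\partial\bar\partial u := \partial\bar\partial(uT),
\]
and Leibniz, combined with $\partial T=\bar\partial T=0$, shows that these definitions agree with the classical ones when $u$ is smooth. The convergence $T\wedge\partial u_n\to T\wedge\partial u$ (and its analogues) then follows immediately from the weak continuity of $\partial$, $\bar\partial$ and $\partial\bar\partial$ applied to $u_nT\to uT$. The locally uniform convergence assumed for $u_n\to u$ is precisely the topology supplied by condition \ref{def-new-dense}.

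For the mixed product $T\wedge\partial u\wedge\bar\partial u$ I would use the smooth polarization identity $2\,\partial u\wedge\bar\partial u=\partial\bar\partial(u^2)-2u\,\partial\bar\partial u$ to define
\[
T\wedge\partial u\wedge\bar\partial u:=\tfrac12\,\partial\bar\partial(u^2 T)-u\cdot(T\wedge\partial\bar\partial u),
\]
where the second term makes sense because $T\wedge\partial\bar\partial u$ is a positive measure, obtained as the weak limit of the positive measures $T\wedge\partial\bar\partial u_n$ with $u_n\in\mathcal F\cap\Ci^\infty$, and can therefore be multiplied by the continuous function $u$. Convergence under smooth approximation then follows by applying the previous paragraph twice, together with the uniform convergence $u_n^2\to u^2$.

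The one place where being in an admissible class is genuinely used is the density of smooth functions in $\mathcal F$ provided by \ref{def-new-dense}: without it one could neither check compatibility of the above definitions with the smooth case nor extract the positivity of $T\wedge\partial\bar\partial u$ in a form strong enough to allow multiplication by a continuous function. I expect this to be the main conceptual step; the remaining verifications are routine weak continuity arguments together with the order-zero property of positive currents.
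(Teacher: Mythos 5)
Your argument is correct for the lemma as stated, and it follows the same integration-by-parts philosophy as the sources the paper points to (the paper itself gives no proof, only the references to Sibony and Dinh--Sibony), but it is genuinely more economical. The cited method defines $T\wedge i\partial u\wedge\bar\partial u$ as a limit of the smooth approximations and must then prove that the limit exists and does not depend on the approximating sequence; this is done via Chern--Levine--Nirenberg-type mass bounds and a Cauchy--Schwarz estimate on $T\wedge i\partial(u_n-u_m)\wedge\bar\partial(u_n-u_m)$. You bypass all of this by writing each product as a derivative of an order-zero current, namely $\partial(uT)$, $\bar\partial(uT)$, $\partial\bar\partial(uT)$ and $\tfrac12\partial\bar\partial(u^2T)-u\cdot\partial\bar\partial(uT)$, which is legitimate precisely because elements of $\mathcal F$ are \emph{continuous} psh functions: $uT$ and $u^2T$ make sense for an order-zero $T$ and depend continuously on $u$ in the locally uniform topology supplied by \ref{def-new-dense}. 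In the general Dinh--Sibony setting $u$ is merely bounded psh and this shortcut is unavailable. Your step $u_n\mu_n\to u\mu$ implicitly uses that the masses $\mu_n(\spt\varphi)$ are locally uniformly bounded; this does follow from the weak convergence of the positive measures $\mu_n$, so that point is fine.

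One caveat on scope rather than on correctness: you use $dT=0$ essentially, to split it into $\partial T=\bar\partial T=0$ and hence to identify $\partial(uT)$ with $\partial u\wedge T$ for smooth $u$. The lemma is, however, later invoked for $\mathcal F$-currents, which are only $i\partial\bar\partial$-closed; there $\partial(uT)=\partial u\wedge T+u\,\partial T$, the correction term $u\,\partial T$ is not obviously of order zero, and one genuinely needs the estimates from the cited method. This mismatch is already present in the statement of the lemma itself, so it is not a gap in your proof of the statement as given, but be aware that your argument does not extend verbatim to the pluriharmonic case actually needed in the rest of the paper.
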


The following properties are a consequence of the previous lemma.
They are stated in \cite[Section 4]{Sib}, see also \cite[Lemma 2.3]{BM},
in the case of Levi currents. Since
a similar proof works also in this more general settings, we will omit it here.

\begin{lemma}\label{lemma-intersections-vanish}
Let $\mathcal F$ be an admissible class. Take $u \in \mathcal F$ and let $T$
be a $\mathcal F$-current. Then
the currents
\[T \wedge \partial u, \quad T \wedge \bar \partial u, \quad \mbox{ and } \quad T \wedge \partial u \wedge \bar \partial u\]
are well defined and vanish identically on $X$.
\end{lemma}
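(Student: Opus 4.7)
The plan is to reduce everything to the vanishing of $T \wedge i\partial u \wedge \bar\partial u$; once this is established, the vanishing of $T \wedge \partial u$ and $T \wedge \bar\partial u$ will follow from a Cauchy--Schwarz inequality for positive currents. Well-definedness is not the issue: by property \ref{def-new-dense}, $u$ is locally a uniform limit of smooth elements $u_n$ of $\mathcal F$, and Lemma \ref{lemma-defi-limit} guarantees that all three intersections are well defined as the corresponding limits along such smooth approximants.

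For the key vanishing, the trick is to apply \ref{def-Levic-wedge-ddc} not only to $u$, but also to $v \circ u$ for a cleverly chosen smooth convex $v$. Take for instance $v(t) := \log(1+e^t)$: one checks that $v$ is smooth, $v'(t)\in(0,1)$, $v''(t)>0$, and $v$ is of linear growth, so axiom \ref{def-convex} yields $v\circ u\in\mathcal F(U)$. Computing the chain rule on the smooth approximants $u_n$ and passing to the limit via Lemma \ref{lemma-defi-limit} (applied both to $u_n$ and to $v\circ u_n$, which is itself a smooth element of $\mathcal F(U)$ by \ref{def-convex}), one obtains
\begin{equation*}
T \wedge i\partial\bar\partial(v\circ u) = v'(u)\,T\wedge i\partial\bar\partial u + v''(u)\, T\wedge i\partial u\wedge \bar\partial u.
\end{equation*}
The left-hand side vanishes by \ref{def-Levic-wedge-ddc} applied to $v\circ u$, and the first term on the right vanishes by \ref{def-Levic-wedge-ddc} applied to $u$. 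Since $v''(u)$ is continuous and strictly positive, while $T\wedge i\partial u\wedge \bar\partial u$ is a positive measure, this forces $T\wedge i\partial u\wedge \bar\partial u = 0$.

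To derive $T\wedge \partial u=0$, invoke the Cauchy--Schwarz inequality for positive $(n-1,n-1)$-currents: the sesquilinear form $(\alpha,\beta)\mapsto \langle T\wedge i\alpha\wedge \bar\beta,\chi\rangle$ on smooth compactly supported $(1,0)$-forms is positive semi-definite for every nonnegative test function $\chi$, and hence for any such $\beta$,
\begin{equation*}
\bigl|\langle T\wedge i\partial u\wedge \bar\beta,\,\chi\rangle\bigr|^2
\le \langle T\wedge i\partial u\wedge \bar\partial u,\,\chi\rangle\cdot \langle T\wedge i\beta\wedge \bar\beta,\,\chi\rangle = 0.
\end{equation*}
Varying $\chi$ and $\beta$ gives $T\wedge i\partial u\wedge \bar\beta=0$ for all such $\beta$, which is exactly $T\wedge \partial u=0$; the identity $T\wedge \bar\partial u=0$ follows by conjugation. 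The main obstacle is the rigorous justification of the chain-rule identity for $i\partial\bar\partial(v\circ u)$ at the level of currents when $u$ is only continuous; this is handled by computing the identity for the smooth approximants $u_n$ and passing to the limit using Lemma \ref{lemma-defi-limit} together with the locally uniform convergence of $v'(u_n)$ and $v''(u_n)$, which follows from the smoothness of $v$. The other mild technical point is the selection of $v$: it must simultaneously have linear growth, satisfy $v'\ge 0$ (so that \ref{def-convex} provides $v\circ u\in\mathcal F$), and be strictly convex (so that the final measure-theoretic division by $v''(u)$ is legitimate), all of which are satisfied by the softmax choice above.
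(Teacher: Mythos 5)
Your proof is correct and follows essentially the same route as the paper's (which is omitted there and deferred to \cite{Sib}*{Section 4} and \cite{BM}*{Lemma 2.3}): first kill $T\wedge i\partial u\wedge\bar\partial u$ by composing $u$ with a smooth strictly convex increasing function, applying the chain rule on smooth approximants from \ref{def-new-dense}, and passing to the limit via Lemma \ref{lemma-defi-limit}; then deduce $T\wedge\partial u=T\wedge\bar\partial u=0$ by Cauchy--Schwarz for the positive current $T$. The one genuine refinement is your replacement of the classical choice $v(t)=t^2$ (used for Levi currents after normalizing $u\ge 0$) by $v(t)=\log(1+e^t)$: this is exactly the point where the authors' claim that ``a similar proof works'' needs an actual check, since axiom \ref{def-convex} only admits convex functions of at most linear growth, and your choice handles it cleanly while keeping $v''>0$ so that the final division in the measure identity is legitimate.
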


\begin{corol}\label{corol-Tdudu}
Let $\mathcal F$ be an admissible class and take $T \in \hat{ \mathcal F}$.
If $u\in \mathcal F \cap \Ci^1$, 
then 
 the 2-vector field associated to $T$
  belongs to the kernel of 
  $i \partial u \wedge \bar \partial u $
  ($||T||$-almost everywhere), whenever the latter
is non-zero.
\end{corol}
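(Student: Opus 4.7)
The plan is to deduce the pointwise conclusion from the global identity $T\wedge i\partial u\wedge \bar\partial u=0$, which is provided by Lemma \ref{lemma-intersections-vanish}.

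First I would recall the standard polar decomposition of a positive current of bidimension $(1,1)$: one writes $T=\vec T\,\|T\|$, where $\|T\|$ is the trace measure of $T$ and $\vec T$ is a $\|T\|$-measurable $(1,1)$-vector field which, at $\|T\|$-almost every point $x\in X$, is a positive simple $(1,1)$-vector on $T_xX$. Using the spectral decomposition of positive $(1,1)$-vectors, at $\|T\|$-a.e.\ $x$ one can find $\lambda_j(x)\geq 0$ and $v_j(x)\in T_x^{1,0}X$ such that
\[
\vec T_x=\sum_{j=1}^r \lambda_j(x)\, v_j(x)\wedge \overline{v_j(x)}.
\]

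Next I would pair this decomposition with the smooth positive $(1,1)$-form $\omega:=i\partial u\wedge \bar\partial u$, which is well-defined since $u\in\mathcal F\cap\Ci^1$. By definition of the pairing between a positive current and a positive form,
\[
T\wedge\omega=\langle\vec T,\omega\rangle\,\|T\|,\qquad \langle\vec T_x,\omega_x\rangle=\sum_{j=1}^r \lambda_j(x)\,\omega_x\bigl(v_j(x),\overline{v_j(x)}\bigr),
\]
where every summand is non-negative. Lemma \ref{lemma-intersections-vanish} gives $T\wedge\omega=0$, hence $\langle\vec T_x,\omega_x\rangle=0$ for $\|T\|$-a.e.\ $x$. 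Since each summand is non-negative, each summand vanishes individually at such $x$.

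Finally I would exploit the explicit form of $\omega_x$: for $v\in T_x^{1,0}X$ one has $\omega_x(v,\bar v)=|\partial u_x(v)|^2$, so the kernel of $\omega_x$ is exactly $\ker\partial u_x$. Thus, at every point $x$ where $\omega_x\neq 0$ (equivalently, $\partial u_x\neq 0$), and for every index $j$ with $\lambda_j(x)>0$, one must have $v_j(x)\in\ker\partial u_x$. Consequently $\vec T_x$ lies in the kernel of $\omega_x$, which is the desired conclusion.

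The only delicate point is the measurability and well-definedness of the pointwise decomposition of $\vec T$; this is standard for positive $(1,1)$-currents, and once it is invoked the argument reduces to the elementary observation that a sum of non-negative numbers vanishes only when all terms vanish. I do not expect any genuine obstacle beyond this bookkeeping.
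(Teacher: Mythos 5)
Your proposal is correct and follows the same route as the paper: the paper's proof simply observes that the statement is equivalent to $T \wedge i\partial u \wedge \bar\partial u = 0$ and invokes Lemma \ref{lemma-intersections-vanish}. You supply the standard bookkeeping (polar decomposition of the positive $(1,1)$-vector $\vec T_x$ and positivity of each summand in the pairing with $i\partial u\wedge\bar\partial u$) that the paper leaves implicit, and this is carried out correctly.
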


In the statement above, $\|T\|$ denotes the mass measure associated to $T$, see for instance
\cite[p. 310]{F}.

\begin{proof}
The statement is equivalent to $T \wedge i \partial u \wedge \bar \partial u $,
hence follows from Lemma \ref{lemma-intersections-vanish}.
\end{proof}

The next
lemma gives a first indication of the relation between 
the supports of $\mathcal F$-currents and
the points where elements of $\mathcal F$ are strictly psh.
The case of Levi
currents is given in \cite[Corollary 2.6 and Lemma 2.7]{BM}.

\begin{lemma}\label{l:stricly-psh-supports}
Let $\mathcal F$ be an admissible class.
\begin{enumerate}
\item If there exists $u \in \mathcal F$ and $x\in X$ such that
$u$ is strictly psh at $x$, then $x \notin \spt T$ for any $T \in \hat {\mathcal F}$;
\item Assume $T\in \hat {\mathcal F}$
has compact support. If $u$ belongs to $\mathcal F(U)$ 
for some allowable open neighbourhood of $\spt T$,
and is strictly psh at some $x\in U$, then $x \notin \spt T$.
\end{enumerate}
\end{lemma}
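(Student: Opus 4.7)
The plan is to reduce both items to a local positivity argument: in a chart around $x$, strict plurisubharmonicity of $u$ yields a pointwise lower bound $i\partial\bar\partial u \geq c\,\omega_0$, the defining vanishing $T\wedge i\partial\bar\partial u = 0$ then forces $T\wedge\omega_0$ to vanish near $x$, and this in turn forces the trace measure of $T$ to vanish there. The only genuine difference between the two items is the choice of the allowable neighborhood of $\spt T$ on which property \ref{def-Levic-wedge-ddc} can be invoked.

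Concretely, I would fix a holomorphic chart centered at $x$ with Euclidean Kähler form $\omega_0 = \frac{i}{2}\sum_j dz_j\wedge d\bar z_j$. Strict plurisubharmonicity of $u$ at $x$ supplies, on some small ball $B$ around $x$, a constant $c>0$ with $i\partial\bar\partial u \geq c\,\omega_0$ as an inequality of positive $(1,1)$-currents. For item $(1)$, since $u\in \mathcal F(X)$ and $X$ is itself allowable, property \ref{def-Levic-wedge-ddc} applied with $U=X$ gives $T\wedge i\partial\bar\partial u = 0$ on all of $X$. For item $(2)$, compactness of $\spt T$ allows me to replace $U$ by a relatively compact open $U'\Subset U$ still containing $\spt T$; by condition \textbf{(A2)} of Definition \ref{defi-admissible-class} we have $u_{|U'}\in\mathcal F(U')$, and $U'$ is allowable, so \ref{def-Levic-wedge-ddc} again yields $T\wedge i\partial\bar\partial u = 0$ on $U'$. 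In both cases, after possibly shrinking $B$, we may assume $B$ lies in the allowable set where the vanishing holds.

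The concluding step is then the one-line positivity computation
\[
0 \;\leq\; c\,T\wedge \omega_0 \;\leq\; T\wedge i\partial\bar\partial u \;=\; 0 \qquad \text{on } B,
\]
where the first inequality comes from the nonnegativity of $T\wedge(i\partial\bar\partial u - c\,\omega_0)$ as a product of positive objects. Hence $T\wedge\omega_0 \equiv 0$ on $B$, and since $T\wedge\omega_0$ coincides, up to a constant depending only on the chart, with the trace measure $\|T\|$, this forces $\|T\|(B)=0$, so $x\notin\spt T$.

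The step requiring most care, and hence the main obstacle, is the meaning of $T\wedge i\partial\bar\partial u$ and of the inequality $T\wedge i\partial\bar\partial u \geq c\,T\wedge \omega_0$ when $u$ is merely continuous. This is exactly where Lemma \ref{lemma-defi-limit} and property \ref{def-new-dense} intervene: locally on $B$ one approximates $u$ by smooth elements $u_n\in \mathcal F$ with $i\partial\bar\partial u_n \geq (c-\epsilon_n)\,\omega_0$ and $\epsilon_n\downarrow 0$; for each $n$ the inequality $T\wedge i\partial\bar\partial u_n \geq (c-\epsilon_n)\,T\wedge \omega_0$ holds between positive measures by pointwise positivity, and passing to the limit via Lemma \ref{lemma-defi-limit} delivers the same inequality with $u$ in place of $u_n$, which is what the positivity computation above needs.
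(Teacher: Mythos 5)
Your overall strategy---obtain $T\wedge i\partial\bar\partial u=0$ near $x$ and then use strict positivity of the complex Hessian to dominate the trace measure $T\wedge\omega_0$---is sound, but it is not the route the paper takes, in either item. For item (1) the paper never invokes \ref{def-Levic-wedge-ddc}: it perturbs $u$ by arbitrary compactly supported smooth $\rho$ (via \ref{def-cone-bounded}/\ref{def-perturb-t}) and uses the vanishing of $T\wedge i\partial(u+\rho)\wedge\bar\partial(u+\rho)$ from Lemma \ref{lemma-intersections-vanish} to force the tangent $2$-vector of $T$ into too many kernels, whence $T=0$ near $x$. For item (2) the paper also avoids applying \ref{def-Levic-wedge-ddc} to the given $u$: it cuts $u$ off near $\spt T$ and uses $i\partial\bar\partial T=0$ plus Stokes to show that the positive measure $T\wedge i\partial\bar\partial u$ has zero total mass (the same mechanism as Lemma \ref{lemma-14ok}). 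Your proof of (2) instead reads \ref{def-Levic-wedge-ddc} as quantified over \emph{every} allowable neighbourhood of $\spt T$, under which reading the vanishing is immediate and compactness of $\spt T$ plays no real role; the paper's Stokes argument is what makes the statement independent of that reading. Your route buys brevity and a unified treatment of both items; the paper's buys robustness in (2) and, in (1), an argument that only uses the $T\wedge\partial v\wedge\bar\partial v$ vanishing.

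There is one genuine gap, exactly at the step you single out as delicate. Property \ref{def-new-dense} provides smooth $u_n\in\mathcal F$ converging to $u$ \emph{locally uniformly}; this does not yield $i\partial\bar\partial u_n\geq(c-\epsilon_n)\,\omega_0$. Locally uniform convergence of psh functions gives no pointwise lower bound on the Hessians of the approximants: for instance $u_n=\max(|z|^2,\,2\Re z_1-1+\tfrac1n)$ (or a smoothing $\widetilde\max$ thereof) converges locally uniformly to the strictly psh function $|z|^2$, yet is pluriharmonic on a neighbourhood of $(1,0,\dots,0)$ for every $n$. So the measure inequality $T\wedge i\partial\bar\partial u_n\geq(c-\epsilon_n)\,T\wedge\omega_0$ is unavailable and cannot be passed to the limit as written. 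The conclusion you want, $T\wedge i\partial\bar\partial u\geq c\,T\wedge\omega_0$ for continuous $u$ with $u-c|z|^2$ psh near $x$, is still true, but it needs a different justification: write $u=v+c|z|^2$ with $v$ continuous psh, regularize $v$ by a \emph{decreasing} convolution $v_\epsilon\downarrow v$ (which is also locally uniform since $v$ is continuous), use positivity of $T\wedge i\partial\bar\partial v_\epsilon$ and Lemma \ref{lemma-defi-limit} to get $T\wedge i\partial\bar\partial v\geq0$, and add $c\,T\wedge\omega_0$. (In fairness, the paper's own reduction ``we may assume $u$ is smooth and strictly psh near $x$'' glosses over the same point; but your version makes the unjustified quantitative claim explicit, so it must be repaired.)
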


\begin{proof}
We prove the two assertion separately.
\begin{enumerate}
\item 
By 
\ref{def-new-dense},
we can assume that $u$ is smooth, and strictly psh near $x$.
Let $U$ be a small neighbourhood of $x$, where
$u$
is strictly psh. Then $U$ is allowable, and $u \in \mathcal F(U)$.
Take a smooth function $\rho$ supported on
$U$.
 By \ref{def-cone-bounded} in Definition \ref{defi-admissible-class},
 we have
that $u + \rho \in \mathcal F (U)$.
If now $T$ is an element of $\hat {\mathcal F}$,
by Lemma \ref{lemma-intersections-vanish}
we must have
$T \wedge i \partial (u + \rho) \wedge \bar \partial (u + \rho) =0$. Since $\rho$
is arbitrary, this implies that $T=0$. Hence, there are no elements of $\hat {\mathcal F}$
having $x$ in their support.
\item As above, by 
\ref{def-new-dense},
we can assume that $u$ is smooth.
Since $u$ is strictly psh at $x$, the same is true in a neighbourhood. In order to prove 
that $x \notin \spt T$, it is then enough to prove that $T \wedge i \partial \bar \partial u =0$ near $x$.
We prove that this is true in the allowable open set $U$.

First observe that $T \wedge i \partial \bar \partial u$ is a positive measure on $U$.
Consider a second open neighbourhood $U'$ of $\spt T$ with $\spt T \subset U' \Subset U$
and let $\chi$ be a smooth function, compactly supported on $U$ and equal to $1$ on $U'$.
By replacing $u$ with $\chi u$, we can assume that $u$ is defined on $X$, psh near $\spt T$, and equal to zero 
near the boundary of $U$.
Since $i \partial \bar \partial T=0$
(by \ref{def-Levic-ddc} in Definition
\ref{def-Levic})
an application of Stokes Theorem gives that
$\langle T \wedge i \partial \bar \partial u,1\rangle=0$. Since 
$T \wedge i \partial \bar \partial u$ is a positive measure, 
this completes the proof.
\end{enumerate}
\end{proof}

A similar application of Stokes Theorem also gives the following result.

\begin{lemma}\label{lemma-14ok}
Let $\mathcal F$ be an admissible class.
Suppose that a current $T$ satisfies the properties
\ref{def-Levic-non0}-\ref{def-Levic-ddc}
in Definition
\ref{def-Levic}.
If 
$T$ has compact support, or if $T$ is supported on a $\mathcal F$-component,
then $T$ satisfies the property \ref{def-Levic-wedge-ddc}.
In particular,
$T \in \hat {\mathcal F}$.
\end{lemma}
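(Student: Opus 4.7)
The plan is to show that $T \wedge i\partial\bar\partial u = 0$ for every $u \in \mathcal F(U)$, with $U$ an allowable neighbourhood of $\spt T$. Since this is a positive measure, it suffices to check that its pairing with each $\chi \in \Ci^\infty_c(U)$ vanishes, and by the density property \ref{def-new-dense} together with Lemma \ref{lemma-defi-limit} I may assume $u$ is smooth.

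In the compact-support case, the argument mirrors the cutoff trick used in the proof of Lemma \ref{l:stricly-psh-supports}. Pick $\chi \in \Ci^\infty_c(U)$ with $\chi \equiv 1$ on a neighbourhood of $\spt T$, so that $\chi u$ is smooth with compact support and property \ref{def-Levic-ddc} yields $\langle T, i\partial\bar\partial(\chi u)\rangle = 0$. Expanding by Leibniz produces the main term $\chi \cdot i\partial\bar\partial u$ together with three cross terms each carrying $\partial\chi$, $\bar\partial\chi$ or $\partial\bar\partial\chi$, all vanishing identically on a neighbourhood of $\spt T$; since $T$ has order zero they pair to zero. Hence $\int T \wedge i\partial\bar\partial u = 0$ and positivity yields $T \wedge i\partial\bar\partial u = 0$.

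The main obstacle is the $\mathcal F$-component case, where $\spt T \subset K$ may be non-compact and no compactly supported $\chi$ can equal $1$ on all of $\spt T$. The decisive observation is that every $u \in \mathcal F$ is constant on $K$, say equal to $c$, so $u-c$ vanishes pointwise on $\spt T$. I would proceed in two Stokes-type steps. First, test $i\partial\bar\partial T = 0$ against $\chi'(u-c)^2/2$ for an arbitrary $\chi' \in \Ci^\infty_c(U)$: all three Leibniz cross terms carry factors $(u-c)^2$, $(u-c)\partial(u-c)$ or $(u-c)\bar\partial(u-c)$ vanishing on $\spt T$, and the main term expands as $\chi' \cdot [i\partial u \wedge \bar\partial u + (u-c)\, i\partial\bar\partial u]$, whose second summand also pairs to zero because $T \wedge i\partial\bar\partial u$ is a positive measure supported in $\spt T$. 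What survives is $\int \chi'\, T \wedge i\partial u \wedge \bar\partial u = 0$ for every $\chi'$, hence $T \wedge i\partial u \wedge \bar\partial u = 0$.

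Second, testing $i\partial\bar\partial T = 0$ against $\chi(u-c)$ for $\chi \in \Ci^\infty_c(U)$ yields
\[
\int \chi\, T \wedge i\partial\bar\partial u = -\int (u-c)\, T \wedge i\partial\bar\partial\chi - 2\Re \int T \wedge i\partial\chi \wedge \bar\partial u.
\]
The first right-hand term vanishes as before, and the second is killed by the Cauchy--Schwarz inequality for the positive sesquilinear form $(\alpha,\beta)\mapsto \int T \wedge i\alpha \wedge \bar\beta$ on smooth $(1,0)$-forms, applied to $\alpha = \partial\chi$ and $\beta = \partial u$: the second diagonal factor $\int T \wedge i\partial u \wedge \bar\partial u$ is zero by the previous step, while the first factor $\int T \wedge i\partial\chi \wedge \bar\partial\chi$ is finite thanks to the compact support of $\partial\chi$. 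Hence $\int \chi\, T \wedge i\partial\bar\partial u = 0$ for every $\chi \in \Ci^\infty_c(U)$, and positivity of the measure gives $T \wedge i\partial\bar\partial u = 0$.
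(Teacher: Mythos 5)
Your argument for the compact--support case is correct and is essentially the paper's proof: both amount to pairing $T$ against $i\partial\bar\partial$ of a compactly supported modification of $u$ (the paper integrates $i\partial\bar\partial(uT)$ by Stokes, you expand $i\partial\bar\partial(\chi u)$ by Leibniz), and then invoking positivity of the total-mass-zero measure $T\wedge i\partial\bar\partial u$.

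In the $\mathcal F$-component case you do genuinely more than the paper, which simply declares \ref{def-Levic-wedge-ddc} ``trivially satisfied'' because every $u\in\mathcal F$ is constant on $\spt T$. That assertion is not literally trivial ($u$ is constant only on $\spt T$, not on a neighbourhood, so $i\partial\bar\partial u$ need not vanish there), and your two-step argument --- testing $i\partial\bar\partial T=0$ against $\chi'(u-c)^2/2$ to obtain $T\wedge i\partial u\wedge\bar\partial u=0$, then against $\chi(u-c)$ and killing the cross term by Cauchy--Schwarz for the positive form $(\alpha,\beta)\mapsto\int T\wedge i\alpha\wedge\bar\beta$ --- is exactly the standard localization that the paper leaves implicit. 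What it buys is an actual proof of the component case rather than an appeal to intuition.

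One point needs care: your opening reduction ``I may assume $u$ is smooth'' is harmless in the compact case (the cutoff argument applies to each smooth approximant $u_n$ and Lemma \ref{lemma-defi-limit} passes the conclusion to the limit), but in the component case it is in tension with the rest of your argument. The smooth approximants furnished by \ref{def-new-dense} live in $\mathcal F(V)$ for small allowable $V$ and need not be constant on $K$, so for them $T\wedge i\partial\bar\partial u_n$ is generally a nonzero positive measure and your two identities prove nothing about $u_n$ individually. The repair is to run both integration-by-parts identities at the level of the approximants and pass to the limit: every term that you discard because ``$u-c$ vanishes on $\spt T$'' involves a factor $(u_n-c)$ or $(u_n-c)^2$ converging locally uniformly to a function vanishing on $\spt T$, paired against currents of order zero (or against the measures $T\wedge i\partial\bar\partial u_n$, whose local masses are bounded by the estimates behind Lemma \ref{lemma-defi-limit}), so these terms still tend to $0$ even though they are not individually zero. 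With that adjustment the proof is complete.
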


\begin{proof}
Assume first that $T$ as compact support.
Take $U$ an allowable relatively compact open neighbourhood of the support of $T$
and fix $u \in \mathcal F (U)$. The current $T \wedge i \partial \bar \partial u$ 
is well defined by Lemma \ref{lemma-defi-limit}
(and conditions \ref{def-Levic-11} and \ref{def-Levic-positive}).
It is also compactly supported in $U$ and,
 since $T$ is positive (by \ref{def-Levic-positive}),
 is a positive measure. We show that
$\langle T \wedge i \partial \bar \partial u , 1\rangle =0$.
Again by Lemma \ref{lemma-defi-limit}
(and  conditions \ref{def-Levic-11} and \ref{def-Levic-positive}),
the currents 
$T \wedge i \partial u$ and $T \wedge i \bar \partial u$ are also well defined. 
By Stokes theorem, we
have that $\langle i \partial \bar \partial (uT), 1\rangle=
\langle i \partial ( \bar \partial u \wedge T) , 1\rangle 
 =  \langle i \bar \partial (\partial u \wedge T), 1\rangle =0$.
The assertion follows (again by Stokes theorem and \ref{def-Levic-ddc}) in this case.

In the case where the support of $T$
 is contained in a $\mathcal F$-component,
 condition \ref{def-Levic-wedge-ddc}
 in Definition \ref{def-Levic}) is trivially
 satisfied since any $u \in \mathcal F$ is constant on the support of $T$. This completes the proof.
\end{proof}

We conclude this section with the following lemma, giving
a relation between the existence of $\mathcal F$-currents and the absence
of strictly psh functions.

\begin{lemma}\label{lemma:claim-prop-empty}
Let $K$ be a compact set, or a $\mathcal F$-component.
 If there are no
$\mathcal F$-currents supported on $K$,
there exists an allowable open neighbourhood $U$
of $K$
and an element of $\mathcal F(U)$
which is strictly psh (on $U$).
 \end{lemma}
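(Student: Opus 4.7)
The plan is to prove the contrapositive: assuming that no element of $\mathcal F(U)$ is strictly plurisubharmonic on $U$ for any allowable open neighbourhood $U$ of $K$, I would construct an $\mathcal F$-current supported in $K$ via a Hahn-Banach separation argument in a space of $(1,1)$-forms, then invoke Lemma \ref{lemma-14ok} to promote the resulting positive $\partial\bar\partial$-closed current to an $\mathcal F$-current. I would focus first on the case where $K$ is compact.

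Fix a relatively compact allowable neighbourhood $U$ of $K$ with $\bar U \subset X$, a strictly positive continuous $(1,1)$-form $\Omega$ on $\bar U$, and nested open sets $K \subset V_0 \Subset V_1 \Subset U$. In the Banach space $E$ of continuous $(1,1)$-forms on $\bar U$, consider the closed convex cone $B$ generated by: (i) $-\alpha$ for positive forms $\alpha$; (ii) $\beta \geq 0$ with $\spt\beta \subset U\setminus \bar V_1$; (iii) $\pm i\partial\bar\partial\psi$ for real $\psi\in \Ci^\infty_c(V_0)$; and (iv) $i\partial\bar\partial u$ for $u \in \mathcal F(U)\cap \Ci^\infty$. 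A continuous linear functional $T$ on $E$ with $T(\Omega)>0$ and $T\leq 0$ on $B$ automatically yields, by reading off (i)--(iv) and using the positivity of $i\partial\bar\partial u$, a non-zero positive current of bidimension $(1,1)$, supported in $\bar V_1$, annihilating $i\partial\bar\partial u$ for smooth $u \in \mathcal F(U)$ and annihilating $i\partial\bar\partial \psi$ for $\psi\in \Ci^\infty_c(V_0)$. The main technical obstacle is verifying that $\Omega \notin B$. If $\Omega = \lim_n\bigl[-\alpha_n + \beta_n + i\partial\bar\partial\psi_n + i\partial\bar\partial u_n\bigr]$ were a valid approximation, folded using the cone property \ref{def-cone-bounded}, then on $V_1$ (where $\beta_n=0$) one would have $i\partial\bar\partial(u_n+\psi_n)\geq \Omega/2$ for $n$ large. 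The function $\phi_n := u_n + \psi_n$ would be smooth, strictly plurisubharmonic, and bounded on $V_1$ (bounded because $\bar V_1 \subset U$ and $u_n$ is continuous on $U$). Property \ref{def-cone-bounded} would then place $\phi_n|_{V_1}$ in $\mathcal F(V_1)$, with $V_1$ an allowable neighbourhood of $K$, contradicting the standing hypothesis.

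Having established $\Omega \notin B$, Hahn-Banach separation produces the desired $T_{V_1}$. Shrinking $V_0\Subset V_1\searrow K$, normalizing $T_{V_1}(\Omega)=1$ to bound the mass, and extracting a weak-$*$ limit via Banach-Alaoglu yields $T$ with $\spt T \subseteq K$ and the same annihilation properties preserved in the limit. The $\partial\bar\partial$-closedness of $T$ as a current on $X$ then follows from a standard cutoff: a real test function $\psi$ splits as $\chi\psi + (1-\chi)\psi$ with $\chi = 1$ on a neighbourhood of $K$ and $\spt\chi$ inside a small $V_0$; the first piece is annihilated by the condition inherited from property (iii), while the second contributes zero because its $i\partial\bar\partial$ is supported away from $\spt T \subseteq K$. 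Lemma \ref{lemma-14ok} then upgrades $T$ to an $\mathcal F$-current. When $K$ is a (possibly non-compact) $\mathcal F$-component, an analogous construction using allowable cocompact neighbourhoods applies, and the second clause of Lemma \ref{lemma-14ok} provides property \ref{def-Levic-wedge-ddc} automatically.
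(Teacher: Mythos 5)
Your route is genuinely different from the paper's. The paper keeps the stated (non-contrapositive) direction: it first uses Lemma \ref{lemma-14ok} to reduce to the non-existence of \emph{any} non-zero positive $\partial\bar\partial$-closed current of bidimension $(1,1)$ supported on $K$, then separates, in the space of currents, the weakly compact convex set of such currents of mass one from the closed subspace of $\partial\bar\partial$-closed currents; the separating functional is represented by $i\partial\bar\partial u$ for a single $u\in\mathcal{C}^\infty_c(X)$, which is shown to be strictly psh near $K$ by testing against Dirac-type currents $\delta_x\, i\xi\wedge\bar\xi$. That direction needs no limiting procedure. Your contrapositive, Sullivan-style separation in the space of forms is legitimate in principle, and your verification that $\Omega\notin B$ is correct and correctly locates where the hypothesis enters (by \ref{def-cone-bounded}, any bounded smooth strictly psh function on the allowable set $V_1$ lies in $\mathcal F(V_1)$); note that generator (iv) of your cone is redundant, since Lemma \ref{lemma-14ok} already upgrades any non-zero positive $\partial\bar\partial$-closed compactly supported current to an $\mathcal F$-current.

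The gap is in the limiting step. For each $n$ the separation gives $T_n$ supported in $\overline{V_1^{(n)}}$ with $\langle T_n,i\partial\bar\partial\psi\rangle=0$ only for $\psi\in\mathcal{C}^\infty_c(V_0^{(n)})$; that is, $\partial\bar\partial T_n=0$ merely on $V_0^{(n)}$, while $T_n$ may carry mass, and have non-vanishing $\partial\bar\partial$, on the shell $\overline{V_1^{(n)}}\setminus V_0^{(n)}$. Because the sets $V_0^{(n)}$ shrink to $K$, for a fixed function $\chi\psi$ with $\spt\chi\subset V_0^{(1)}$ you cannot assert $\langle T_n,i\partial\bar\partial(\chi\psi)\rangle=0$ for large $n$; so the annihilation condition ``inherited from (iii)'' is not in fact inherited by the weak-$*$ limit, and your cutoff argument does not establish $i\partial\bar\partial T=0$ (condition \ref{def-Levic-ddc}, which Lemma \ref{lemma-14ok} requires as input). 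The repair is easy: put $\pm i\partial\bar\partial\psi$ for \emph{all} real $\psi\in\mathcal{C}^\infty_c(U)$ into the cone. Your proof that $\Omega\notin B$ only uses the inequality $i\partial\bar\partial(u_n+\psi_n)\geq\Omega/2$ on $V_1$, which nowhere needs $\spt\psi_n\subset V_0$; with this change each $T_n$ is $\partial\bar\partial$-closed on $U$, hence on $X$ since $\spt T_n\Subset U$ and $T_n$ has order zero, and that property does pass to the limit. Separately, the non-compact case ($K$ an unbounded $\mathcal F$-component) is only asserted: the sup-norm Banach space of forms on $\overline U$, the mass normalization $T(\Omega)=1$, and the compactness invoked for Banach--Alaoglu all rely on $\overline U$ being compact, so ``an analogous construction applies'' is not yet a proof there.
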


 \begin{proof}
 Let $S$ be a positive, $\partial \bar \partial$-closed current
 of bidimension (1,1) supported on $K$.
  By Lemma \ref{lemma-14ok}, $S$ is a $\mathcal F$-current or $S=0$.
  By assumption, we have that $S=0$.
  Hence, we can assume that there are no non-zero positive $\partial \bar \partial$-closed
currents of bidimension (1,1) supported on $K$. 
   To conclude, we 
  use a duality argument as in \cite[Proposition 2.1]{Sib},
   see also \cites{OS,Sul}.
  Consider the topological vector space of currents 
  of bidimension $(1,1)$ with the topology of weak convergence; denote by
  $C$ the set of positive currents of bidimension 
  $(1,1)$ of mass 1 (with respect to some Hermitian metric on $X$) and supported on $K$
  and  by $Y$ the set of $\partial \bar \partial$-closed currents on $X$.
  By the arguments above,
  we have $C \cap Y = \emptyset$. By Hahn-Banach theorem,
  $C$ and $Y$ are separated, i.e., there exist $\delta >0$ and a continuous
  linear functional $L$ such that $Y\subseteq \ker L$ and $L(T)>\delta$ for all $T\in C$.

	By definition, $Y$ is the annihilator of the space of $\partial\bar\partial$-exact $(1,1)$-forms
	with compact support, i.e., $Y=\{i\partial\bar\partial u\ :\ u\in\mathcal{C}^\infty_c(X)\}^\perp$.
	
	As the spaces of test forms are reflexive, we have that the continuous linear functional $L$
	can be represented as $L(T)=\langle T, i \partial \bar \partial u\rangle$ for some $u\in\mathcal{C}^\infty_c(X)$.
	
	The separation condition implies that
	$\langle T, i \partial \bar \partial u\rangle\geq \delta$ for all $T \in C$;
	if we test this condition against  the current $T=\delta_x i \xi\wedge\overline{\xi}$
	with $x \in K$
	(where  $\delta_x$ is the Dirac mass at $x$ and $\xi$ is 
	any $(1,0)$ tangent vector at $x$), we obtain that $u$ is strictly subharmonic on the disc
	through $x$ with complex direction $\xi$. The function $u$ is then strictly psh on a neighbourhood 
 of $K$, hence it belongs to $\mathcal F(U)$ for every allowable neighbourhood of its support, by 
 the property \ref{def-cone-bounded} of admissible classes. 	
The proof is complete.
 \end{proof}

\section{Proof of Theorem \ref{teo-main-singular-currents}}
\label{s:proof-thm}

In this section we prove the assertions in Theorem
\ref{teo-main-singular-currents}.

\begin{propos}\label{prop-support-sigma}
Let $X$ be a complex manifold and $\mathcal F$
 an admissible class.
  All $\mathcal F$-currents are supported in the singular locus $\Sigma^{\mathcal F}$ of $\mathcal F$.
  In particular, if $\Sigma^{\mathcal F}$ is empty, there are no $\mathcal F$-currents.
\end{propos}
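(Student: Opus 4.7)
The plan is to deduce this proposition directly from Lemma \ref{l:stricly-psh-supports}(1), which already did the analytical work by showing that the presence of a globally defined element of $\mathcal{F}$ that is strictly psh at a point $x$ forces $x$ to lie outside the support of every $\mathcal{F}$-current. All that remains is to translate this pointwise statement into the statement about the singular locus. Concretely, I would argue by the contrapositive: fix $T \in \hat{\mathcal{F}}$ and take an arbitrary $x \in X \setminus \Sigma^{\mathcal{F}}$. By the very definition of $\Sigma^{\mathcal{F}}$, there exists some $\phi \in \mathcal{F}(X)$ which is strongly plurisubharmonic at $x$. Applying Lemma \ref{l:stricly-psh-supports}(1) to this $\phi$ gives $x \notin \operatorname{spt} T$. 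Since $x$ was arbitrary, we obtain $X \setminus \Sigma^{\mathcal{F}} \subseteq X \setminus \operatorname{spt} T$, i.e., $\operatorname{spt} T \subseteq \Sigma^{\mathcal{F}}$, which is the first assertion.

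For the second assertion, assume $\Sigma^{\mathcal{F}} = \emptyset$. If some $T \in \hat{\mathcal{F}}$ existed, the first part would give $\operatorname{spt} T \subseteq \emptyset$, so $T$ would be the zero current, contradicting property \ref{def-Levic-non0} in Definition \ref{def-Levic}. Hence $\hat{\mathcal{F}} = \emptyset$. I do not foresee any real obstacle here: the entire content of the proposition is packaged in Lemma \ref{l:stricly-psh-supports}(1), and the proof is essentially a matter of unwinding definitions. The only care needed is to make sure that the function $\phi$ extracted from $x \notin \Sigma^{\mathcal{F}}$ is defined on all of $X$ (which it is, by the definition of the singular locus), so that Lemma \ref{l:stricly-psh-supports}(1), which applies to global elements of $\mathcal{F}$, can be invoked without further adjustment.
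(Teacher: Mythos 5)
Your proof is correct, and it is logically sound to quote Lemma \ref{l:stricly-psh-supports}(1): that lemma is established in Section \ref{s:currents}, before the proposition, its proof does not depend on the proposition (no circularity), and its hypothesis is exactly what the definition of $\Sigma^{\mathcal F}$ hands you at a point $x\notin\Sigma^{\mathcal F}$, namely a global $\phi\in\mathcal F(X)$ strongly psh at $x$. The contrapositive bookkeeping and the deduction of the second assertion from \ref{def-Levic-non0} are both fine. The difference with the paper is one of packaging rather than substance: the paper does not invoke Lemma \ref{l:stricly-psh-supports} here but re-runs the underlying mechanism from scratch --- it smooths $\phi$ via \ref{def-new-dense}, then uses the perturbation property \ref{def-perturb-t} to produce a finite family of forms $\alpha_j=\partial(\phi+t_j\rho_j)\wedge\bar\partial(\phi+t_j\rho_j)$ with independent kernels, so that $T\wedge\alpha_j=0$ for all $j$ (Lemma \ref{lemma-intersections-vanish}) forces the $2$-vector of $T$ to vanish near $x$. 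Your route buys brevity and avoids duplicating that perturbation argument; the paper's version buys a self-contained proof that makes the linear-algebra point (intersecting independent kernels) explicit. Both ultimately rest on Lemma \ref{lemma-intersections-vanish} together with the possibility of perturbing a strictly psh element of $\mathcal F$ inside the class.
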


\begin{proof}
Take $x \in X$ and assume that there is a function $\phi \in \mathcal F$ which is strictly psh at $x$.
In particular, $\phi \in \mathcal (U)$ for some small allowable neighbourhood of $x$.
 By
  \ref{def-new-dense}
 (and up to
 possibly restricting $U$), 
 we can approximate $\phi$
 by smooth elements $\phi_n$
 of $\mathcal F$, which are then strictly psh on some neighbourhood of $x$.
 We can then assume that $\phi$ is smooth and strictly psh near $x$.
 We then use the property \ref{def-perturb-t}
 in Definition \ref{defi-admissible-class}, applied to
 a family of smooth functions $\rho_i, 1\leq j\leq 2\dim X $
such that the kernels of $\alpha_i := \partial (\phi + t_j \rho_j)\wedge \bar \partial (\phi+ t_j \rho_i)$
are independent over $\mathbb R$, where the $t_j$'s are given by that property. 
This property stays true in a neighbourhood of $x$.

If $T$ is a Levi current whose support contains $x$, by Lemma \ref{lemma-intersections-vanish} we should have
$T\wedge \alpha_i=0$ for all $i$. This gives a contradiction, and concludes the proof.
\end{proof}

\begin{propos}\label{prop-empty}
Let $X$ be a complex manifold
and $\mathcal F$ an admissible class.
Suppose that $\mathcal F$ contains an exhaustion function and
that
 there are no $\mathcal F$-currents. Then there exists 
an element of $\mathcal F(X)$ which is
an exhaustion function and
everywhere
 strictly psh.
 In particular, 
 the singular locus $\Sigma^{\mathcal F}$
is empty.
\end{propos}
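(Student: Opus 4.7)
The plan is to use the exhaustion $\psi$ to exhaust $X$ by the compact sets $K_n = \{\psi \leq n\}$, apply Lemma \ref{lemma:claim-prop-empty} on each to produce a local strictly plurisubharmonic element of $\mathcal F$, glue these into global functions $\tilde\phi_n \in \mathcal F(X)$, and sum them via axiom (A1) into a single strictly plurisubharmonic exhaustion.

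Since $\psi$ is continuous with compact sublevel sets, it is bounded below, so we may assume $\psi \geq 1$ (adding a constant preserves membership in $\mathcal F(X)$ by \ref{def-cone-bounded} and the convex cone property). For each $n$, no $\mathcal F$-current is supported on $K_n$, so Lemma \ref{lemma:claim-prop-empty} produces a relatively compact allowable open neighborhood $U_n \subset \{\psi < n+2\}$ of $K_n$ and a function $\phi_n \in \mathcal F(U_n)$ strictly plurisubharmonic on $U_n$. To globalize $\phi_n$, we glue it with a scalar multiple of $\psi$ via a regularized maximum. Because $\phi_n$ is bounded on the compact closure $\overline{U_n}$ and $\psi$ increases by at least $1$ between $K_n$ and the shell $U_n \cap \{\psi \geq n+1\}$, one may choose $A_n>0$ sufficiently large (compared to the oscillation of $\phi_n$ on $\overline{U_n}$) and a shift $C_n$ such that
\[
\phi_n - C_n > A_n\psi + 2\epsilon \ \text{on } K_n,\qquad \phi_n - C_n < A_n\psi - 2\epsilon \ \text{on } U_n\cap\{\psi \geq n+1\},
\]
for a fixed small $\epsilon>0$. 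Define $\tilde\phi_n := \max_\epsilon(\phi_n-C_n,\, A_n\psi)$ on $U_n$ and $\tilde\phi_n := A_n\psi$ on the cocompact set $X\setminus\{\psi\leq n+1\}$; the two definitions agree on the overlap, where the regularized maximum reduces to $A_n\psi$. On $U_n$ this piece lies in $\mathcal F(U_n)$ by \ref{def-convex} applied to the smooth, convex, coordinatewise non-decreasing function $\max_\epsilon$ (whose partial derivatives lie in $[0,1]$, hence of at most linear growth); on the cocompact piece it lies in $\mathcal F$ because $\mathcal F$ is a convex cone containing $\psi$. Axiom \ref{def-glue} then yields $\tilde\phi_n \in \mathcal F(X)$, and by construction $\tilde\phi_n = \phi_n - C_n$ on a neighborhood of $K_n$, hence is strictly plurisubharmonic there. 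Finally, adding one more constant we may arrange $\tilde\phi_n \geq \psi$ on all of $X$ while keeping the previous properties.

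By axiom (A1) there exist $\epsilon_n > 0$ such that $\Phi := \sum_n \epsilon_n \tilde\phi_n$ converges locally uniformly to an element of $\mathcal F(X)$. For any $x\in X$, pick $n$ with $x$ in the interior of $K_n$: on a neighborhood of $x$ we have $i\partial\bar\partial \tilde\phi_n \geq c\,\omega$ for some $c>0$ and a local Hermitian form $\omega$, while all other $\tilde\phi_m$ are plurisubharmonic; by weak continuity of $i\partial\bar\partial$ under locally uniform limits of plurisubharmonic functions this gives $i\partial\bar\partial \Phi \geq \epsilon_n\, i\partial\bar\partial \tilde\phi_n$ near $x$, so $\Phi$ is strictly plurisubharmonic at $x$. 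Moreover, at any $x_0\in X$ the pointwise estimate $\Phi(x_0) \geq (\sum_n \epsilon_n)\psi(x_0)$ together with $\psi(x_0)\geq 1$ forces $E := \sum_n \epsilon_n < \infty$; consequently $\Phi \geq E\psi$ globally, with $E>0$, so $\Phi$ is an exhaustion. Hence $\Phi\in\mathcal F(X)$ is a strictly plurisubharmonic exhaustion, and $\Sigma^{\mathcal F} = \emptyset$ by definition. The main obstacle is the gluing step: balancing the scale $A_n$ against the shift $C_n$ so that the regularized maximum transitions cleanly between $\phi_n - C_n$ on $K_n$ and $A_n\psi$ across the shell, and verifying membership in $\mathcal F(X)$ by invoking the axioms \ref{def-glue}, \ref{def-convex}, and \ref{def-cone-bounded} in the correct way.
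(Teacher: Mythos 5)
Your proposal is correct and follows essentially the same route as the paper: exhaust $X$ by compact sets, invoke Lemma \ref{lemma:claim-prop-empty} on each to obtain a locally defined strictly psh element of $\mathcal F$, splice it onto a monotone reparametrization of the exhaustion by a regularized maximum together with axioms \ref{def-convex} and \ref{def-glue}, and sum the resulting global functions via (A1). The only slip is an off-by-one in the gluing cover: $U_n\cup\{\psi>n+1\}$ need not cover $X$, since $U_n$ is only guaranteed to be a neighbourhood of $\{\psi\le n\}$; applying the lemma to $\{\psi\le n+1\}$ instead (or replacing the cocompact piece by the complement of a compact set contained in $U_n$ and moving the shell accordingly) repairs this immediately.
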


\begin{proof}
 We can follow the arguments of the proof
of \cite[Theorem 4.4]{Sib}. We will construct a strictly
psh exhaustion function on $X$ by applying inductively
Lemma \ref{lemma:claim-prop-empty}.

By assumption, $X$ admits an exhaustion function 
$\phi$ in $\mathcal F(X)$.
We can then construct a sequence
of compact sets $K_n$ such that $K_n \Subset \mathring {K_{n+1}}$,
$\cup_n K_n = X$, and with the property that, for all $n$,
\[
K_n = \{x \in X \colon u(x) \leq \max_{y \in K_n} u(u) \quad  \forall u \in \mathcal F\}.
\]
By applying Lemma \ref{lemma:claim-prop-empty},
we can find a sequence of functions $v_n$ which are strictly psh
 on $\mathcal V_n$
on some open allowable neighbourhood $V_n$ of $K_n$.
Choose for every $n$ a convex increasing function $\chi_n \colon \mathbb R \to \mathbb R$
such that
\[
\begin{cases}
\chi_n (\phi) < \inf_{K_n} v_n \mbox{ on } K_{n-1}\\
\chi_n (\phi)  \geq  \sup v_n \mbox{ near } \partial K_n
\end{cases}
\]
and define \[u_n := \widetilde \max(\chi_n (\phi), v_n).\]
where $\widetilde \max$ is some smooth function sufficiently close to $\max$.
This function is then equal
to $v_n$ on $K_{n-1}$ and to $\chi_n (\phi)$ on $X\setminus K_n$. It is smooth strictly psh 
on a neighbourhood $V_n'$ of
 $K_{n-1}$,
and it is a
psh and continuous exhaustion function for $X$.
It belongs to $\mathcal F$ by Property 
\ref{def-glue} of Definition \ref{defi-admissible-class}.

By the first condition in Definition \ref{defi-admissible-class},
there exist $\epsilon_n$ such that the sequence
$\sum_{n} \epsilon_n u_n \in \mathcal F(X)$. This function satisfies the required properties. 
\end{proof}

\begin{propos}
Let $T$ be an extremal $\mathcal F$-current. All elements of $\mathcal F$
are constant on the support of $T$.
\end{propos}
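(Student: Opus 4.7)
The plan is to argue by contradiction. Suppose $u \in \mathcal F$ is non-constant on $\spt T$. Pick $x, y \in \spt T$, a real number $c$, and $\varepsilon>0$ with $u(x) < c-\varepsilon$ and $u(y) > c+\varepsilon$, and fix a smooth non-decreasing $\chi \colon \mathbb R \to [0,1]$ with $\chi \equiv 0$ on $(-\infty,c-\varepsilon]$ and $\chi \equiv 1$ on $[c+\varepsilon,+\infty)$. Set
\[
T_1 := 2\chi(u) \cdot T, \qquad T_2 := 2(1-\chi(u)) \cdot T,
\]
so that $T=(T_1+T_2)/2$, both $T_i$ are positive of bidimension $(1,1)$, and each is non-zero: on a neighbourhood of $y$ we have $\chi(u)\equiv 1$, hence $T_1=2T\neq 0$ there since $y\in\spt T$, and symmetrically for $T_2$ near $x$. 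The wedge condition \ref{def-Levic-wedge-ddc} is inherited from $T$: for any $v\in\mathcal F$ on an allowable neighbourhood of $\spt T_i\subseteq \spt T$, we get $T_i\wedge i\partial\bar\partial v = 2\chi(u)\cdot(T\wedge i\partial\bar\partial v)=0$.

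The heart of the proof is to check $i\partial\bar\partial T_1=0$. Assuming first $u$ smooth, a direct Leibniz computation gives
\begin{align*}
\partial\bar\partial(\chi(u)T) = {} & \chi''(u)\,\partial u\wedge\bar\partial u\wedge T + \chi'(u)\,\partial\bar\partial u\wedge T \\
& {} + \chi'(u)\bigl(\partial u\wedge\bar\partial T - \bar\partial u\wedge\partial T\bigr) + \chi(u)\,\partial\bar\partial T.
\end{align*}
The first, second, and last summands vanish by Lemma \ref{lemma-intersections-vanish} and \ref{def-Levic-ddc}. For the bracket, applying $\bar\partial$ to the identity $T\wedge\partial u=0$ (Lemma \ref{lemma-intersections-vanish}) and using $T\wedge\partial\bar\partial u=0$ yields $\bar\partial T\wedge\partial u=0$, and symmetrically $\partial T\wedge\bar\partial u=0$. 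Hence $i\partial\bar\partial T_1=0$, so $T_1,T_2\in\hat{\mathcal F}$.

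Extremality of $T$ now forces $T_1=T$, i.e.\ $(2\chi(u)-1)T=0$. But on a neighbourhood of $y$ we have $\chi(u)\equiv 1$, so this equation gives $T=0$ near $y$, contradicting $y\in\spt T$.

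The main obstacle is the regularity of $u$, since elements of $\mathcal F$ are only assumed continuous: the Leibniz computation above is formal for smooth $u$. To address this, I would use \ref{def-new-dense} to approximate $u$ locally by smooth $u_n\in\mathcal F$; since $\chi$ is continuous and $u_n\to u$ locally uniformly, $\chi(u_n) T \to \chi(u) T$ weakly, while Lemma \ref{lemma-defi-limit} ensures that the wedge products $T\wedge\partial u_n$, $T\wedge\bar\partial u_n$, $T\wedge\partial u_n\wedge\bar\partial u_n$, and $T\wedge\partial\bar\partial u_n$ converge to their limits for $u$ (all zero by Lemma \ref{lemma-intersections-vanish}). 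The identity $\partial\bar\partial(\chi(u_n)T)=0$ thus passes to the limit, completing the argument.
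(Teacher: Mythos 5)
Your proof is correct and follows essentially the same route as the paper: both arguments multiply $T$ by a monotone function of $u$ that vanishes on part of $\spt T$, use the vanishing of $T\wedge\partial u$, $T\wedge\bar\partial u$, $T\wedge\partial u\wedge\bar\partial u$ and $T\wedge\partial\bar\partial u$ (Lemmas \ref{lemma-defi-limit} and \ref{lemma-intersections-vanish}) to show the product is again an $\mathcal F$-current, and then contradict extremality. The only cosmetic difference is that the paper takes a convex increasing $h$ so that $h(u)\in\mathcal F$ by \ref{def-convex}, whereas your cutoff $\chi(u)$ need not lie in $\mathcal F$ --- but since you verify $i\partial\bar\partial(\chi(u)T)=0$ directly by the Leibniz computation, this is immaterial, and your explicit splitting $T=(T_1+T_2)/2$ with $0\le\chi\le 1$ makes the extremality step cleaner.
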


\begin{proof}
Let $U$ be an admissible neighbourhood of the support of $T$ and
fix $v\in\mathcal F(U)$; we have that $vT$ is again an $\mathcal F$-current.
Indeed, by Lemmas \ref{lemma-defi-limit} and \ref{lemma-intersections-vanish},
we have that the currents
$$\partial v\wedge T, \quad \bar\partial v\wedge T,\quad 
\mbox{ and } \quad \partial\bar\partial v\wedge T$$
are all well-defined and vanish identically. Therefore, $\partial\bar\partial(vT)$
is well defined and vanishes as well. Hence $vT$ is a $\mathcal F$-current.

Now, suppose that $u\in\mathcal F(U)$ is not constant on the support of $T$; then,
without loss of generality, we can suppose  that $\{u<0\}$ and $\{u>0\}$ both intersect
the support of $T$ in a proper subset with non-empty interior. Consider a convex increasing
function $h:\R\to\R$ such that $h(t)=0$ if and only if $t\leq 0$ and $h(t)>0$ otherwise.
Then, by the first part of the proof, 
$h(u)T$ is a $\mathcal F$-current, which is a contradiction with the extremality of $T$.
\end{proof}

\begin{defin}
Let $\mathcal F$ be an admissible class. The \emph{support}
$S^{\mathcal F}=S^{\mathcal F}_X$
of $\mathcal F$ is the union of the supports of all the $\mathcal F$-currents.
\end{defin}

\begin{propos}\label{p:total-support}
Let $F$ be an admissible class. The set $\hat {\mathcal F}$ is closed
for the weak topology of currents. Moreover, there exists
$T \in \hat{\mathcal F}$ such that $\spt T = S^{\mathcal F}$.
\end{propos}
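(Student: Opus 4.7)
The strategy is to prove the closure of $\hat{\mathcal F}$ in the weak topology first, and then use this closure to build a single universal $\mathcal F$-current from a countable collection whose supports exhaust a dense subset of $S^{\mathcal F}$.

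For the closure, let $T_n \in \hat{\mathcal F}$ converge weakly to a nonzero current $T$. The conditions \ref{def-Levic-non0}--\ref{def-Levic-ddc} all pass to the weak limit: bidimension $(1,1)$ and positivity are closed conditions, and $i\partial\bar\partial$ is weakly continuous. For \ref{def-Levic-wedge-ddc}, fix $u \in \mathcal F(U)$ with $U$ an allowable neighbourhood of $\spt T$ and, by \ref{def-new-dense}, approximate $u$ locally uniformly by smooth $u_k \in \mathcal F(U) \cap \Ci^\infty$; by Lemma \ref{lemma-defi-limit} it suffices to show $T \wedge i\partial\bar\partial u_k = 0$ on $U$ for every $k$. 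Testing against a compactly supported $\chi \in \Ci^\infty(U)$ and using that $T_n \to T$ weakly, I would identify $\langle T, \chi\, i\partial\bar\partial u_k\rangle$ as the weak limit of the corresponding pairings with $T_n$, and then show that these vanish by invoking the defining property of $T_n \in \hat{\mathcal F}$ locally on a small allowable open set $V \Subset U$ on which $u_k$ is bounded smooth psh and hence, by \ref{def-cone-bounded}, lies in $\mathcal F(V)$.

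For the universal current, fix (thanks to second countability of the manifold $X$) a countable basis $\{V_j\}$ of its topology and select, for each $V_j$ meeting $S^{\mathcal F}$, an $S_j \in \hat{\mathcal F}$ with $\spt S_j \cap V_j \neq \emptyset$. Fix a Hermitian metric and an exhaustion $X = \bigcup_m K_m$ by relatively compact open sets, and choose positive weights $\lambda_j$ so small that the partial sums $T^{(N)} := \sum_{j \le N} \lambda_j S_j$ form a weakly Cauchy sequence (for example by imposing $\sum_j \lambda_j \|S_j\|(K_j) < \infty$). Each $T^{(N)}$ lies in $\hat{\mathcal F}$ by linearity of the defining properties together with Lemma \ref{lemma-defi-limit}, so their weak limit $T := \sum_j \lambda_j S_j$ belongs to $\hat{\mathcal F}$ by the closure step. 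Since $\lambda_j S_j \le T$ as positive currents we have $\spt T \supseteq \spt S_j$ for every $j$, whence $\spt T \supseteq \overline{\bigcup_j \spt S_j} \supseteq S^{\mathcal F}$; the reverse inclusion $\spt T \subseteq S^{\mathcal F}$ is immediate from the definition of $S^{\mathcal F}$, so $\spt T = S^{\mathcal F}$ (and, as a byproduct, $S^{\mathcal F}$ is closed).

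The hard part is the closure argument, and specifically the verification of \ref{def-Levic-wedge-ddc} for the weak limit: the allowable neighbourhood $U$ of $\spt T$ need not contain $\spt T_n$, so the wedge axiom for each $T_n$ cannot be applied directly with $u$. Overcoming this requires exploiting the local character of the wedge product $T_n \wedge i\partial\bar\partial u$ together with the fact, guaranteed by \ref{def-cone-bounded}, that smooth bounded psh functions on any allowable open set automatically belong to $\mathcal F$.
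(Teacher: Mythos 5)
Your proposal follows the same two--step architecture as the paper's proof, and for the closure part the strategy is identical: pass conditions \ref{def-Levic-non0}--\ref{def-Levic-ddc} to the weak limit, verify \ref{def-Levic-wedge-ddc} first for smooth $u$ by testing $\chi T_n\wedge i\partial\bar\partial u \to \chi T\wedge i\partial\bar\partial u$ against arbitrary cut-offs $\chi$, then handle continuous $u$ via \ref{def-new-dense} and Lemma \ref{lemma-defi-limit}. For the universal current the paper argues slightly differently: it uses that $\hat{\mathcal F}$, being closed, is separable, picks a countable \emph{dense} subset $\{T_j\}\subset\hat{\mathcal F}$, and forms $\sum_j \epsilon_j T_j$; density then forces every element of $\hat{\mathcal F}$ to be supported on the support of this sum. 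Your variant --- indexing the countable family by a countable basis $\{V_j\}$ of $X$ and choosing $S_j$ with $\spt S_j\cap V_j\neq\emptyset$ --- is equally valid and arguably more self-contained, since it avoids the separability/metrizability discussion and delivers the closedness of $S^{\mathcal F}$ as an immediate by-product; both versions need the same summability of the weights on compacta and the fact that finite positive combinations stay in $\hat{\mathcal F}$.

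One caveat on the point you rightly single out as delicate. You observe that the allowable neighbourhood $U$ of $\spt T$ need not contain $\spt T_n$, so \ref{def-Levic-wedge-ddc} for $T_n$ cannot be invoked directly, and you propose to fix this by noting that $u_k$ restricted to a small allowable $V\Subset U$ lies in $\mathcal F(V)$ by \ref{def-cone-bounded}. As stated this does not close the argument: condition \ref{def-Levic-wedge-ddc} for $T_n$ only quantifies over $u\in\mathcal F(W)$ with $W$ an allowable neighbourhood of \emph{all of} $\spt T_n$, so membership of $u_k|_V$ in $\mathcal F(V)$ for a small $V$ far from most of $\spt T_n$ triggers nothing. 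You would additionally need to replace $u_k$, near each point of $V$, by a globally defined bounded (smooth) psh function with the same complex Hessian there --- or simply reduce, as the paper implicitly does in its smooth step, to $u\in\mathcal F(X)$, for which $X$ itself is an allowable neighbourhood of every $\spt T_n$ and the issue disappears. This is a genuine loose end in your write-up, though it is fair to note that the paper's own proof passes over the same point in silence.
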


In particular, observe that $S^{\mathcal F}$ is closed in $X$.

\begin{proof}
Let $T_n$ be a sequence of elements in $\hat {\mathcal F}$
and assume that 
there exists a current $T$ on $X$
such that $T_n\to T$ (in the sense of currents). 
We can assume that $T$ is non-zero.
Clearly $T$ is positive and of bidimension (1,1).
Since $i \partial \bar \partial T_n=0$
for all $n$, we deduce that $i \partial \bar \partial T=0$. We need to prove
that $T \wedge i \partial \bar \partial u=0$ for all $u \in \mathcal F$.
We follow the argument given in \cite{Sib} for the case of Levi currents.

Assume first that $u$ is smooth. In this case, for any smooth function
$\chi$ on $X$, we have
$\chi T_n \wedge i \partial \bar \partial u\to  \chi T \wedge i \partial \bar \partial u$.
Since the left hand side of this expression vanishes for all $n$,
we deduce that the same is true for the right hand side.
Since
$\chi$ is arbitrary, we obtain that 
$T \wedge i \partial \bar \partial u=0$, as desired.

Let now $u$ be any element of $\mathcal F$ and let $\chi$ a smooth function with compact support.
It is enough to work locally near the support of $\chi$, and we can assume that this support is arbitrarily small.
Recall that any element in $\mathcal F$ is a a continuous psh function on $X$.
By
 \ref{def-new-dense},
there exists 
a sequence  
$u_n$
 of smooth psh functions, 
 converging to
 $u$ in a 
neighbourhood of $\chi$.
By the arguments above, we have $\chi T \wedge \partial \bar \partial u_n=0$ for all
$n$. The first assertion now follows from Lemma \ref{lemma-defi-limit}.

Let us now prove the second statement. Since $\hat {\mathcal F}$ is closed, it is separable.
Let us consider a countable dense subset $T_j$ of $\hat{\mathcal F}$. For $\epsilon_n$ small enough, consider
the current $T = \sum_n \epsilon_n T_n$ (which is in $\hat {\mathcal F}$) 
and denote by
$S$ its support. By the density of the $T_j$
in $\hat {\mathcal F}$, we obtain that
 any element of $\hat {\mathcal F}$
is supported on $S$.
Hence, $S=S^{\mathcal F}$, and the proof is complete.
\end{proof}

The following proposition gives the
 relation between 
$\mathcal F$-currents and
local maximum sets, and concludes the proof of Theorem \ref{teo-main-singular-currents}.

\begin{propos}\label{p:support-locmax}
Let $\mathcal F$ be an admissible class.
\begin{enumerate} 
\item Assume that $\mathcal F(X)$
contains an exhaustion function. Let $T$ be a $\mathcal F$-current.
If $\spt T$ is compact, then
it is a local maximum set.
\item Assume that $K \subset X$ is an $\mathcal F$-component,
or a compact local maximum set.
 Then there exists
$T \in \hat {\mathcal F}$
such that $\spt T \subseteq K$.
\end{enumerate}
\end{propos}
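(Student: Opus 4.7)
Part (1). I would argue by contradiction, assuming $K := \spt T$ is compact but not a local maximum set. By Remark~\ref{rem-local-max-set-equiv}, $K$ then fails to be a local maximum set for the class $\mathcal F$ as well, and the construction in the proof of Proposition~\ref{p:equivalence-locmax} (via Slodkowski's quadratic-support lemma) yields $z^* \in K$, $r, \epsilon > 0$, and a function $v \in \mathcal F(B(z^*, r))$ which is smooth and strictly plurisubharmonic on $B(z^*, r)$, with $v(z^*) = 0$ and $v(z) \leq -\epsilon |z - z^*|^2$ on $K \cap B(z^*, r)$. The plan is then to extend $v$ to a function $\widetilde v \in \mathcal F(X)$ that remains strictly psh on a neighbourhood of $z^*$: exploiting the exhaustion function $\phi \in \mathcal F(X)$, I would glue $v$ with $\chi(\phi)$ (for a suitable convex increasing $\chi \colon \mathbb R \to \mathbb R$) via the smooth maximum $\widetilde\max$, in the style of the construction from the proof of Proposition~\ref{prop-empty}; properties~\ref{def-convex} and~\ref{def-glue} then ensure $\widetilde v \in \mathcal F(X)$, with $\widetilde v$ coinciding with $v$ in a small neighbourhood of $z^*$. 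Lemma~\ref{l:stricly-psh-supports} now forbids $z^* \in \spt T$, giving the desired contradiction.

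Part (2). In both cases I argue by contradiction: assume no $T \in \hat{\mathcal F}$ has $\spt T \subseteq K$. Lemma~\ref{lemma:claim-prop-empty}, which applies both to compact sets and to $\mathcal F$-components via Lemma~\ref{lemma-14ok}, produces an allowable open neighbourhood $U$ of $K$ and $u \in \mathcal F(U)$ strictly plurisubharmonic on $U$. In each case $K$ is a local maximum set: by hypothesis when $K$ is a compact local maximum set, and by Theorem~\ref{teo-summary-sl}(3) combined with the equivalence between 1-pseudoconcavity and local maximum sets (\cite{Sl}*{Theorem~4.2}) when $K$ is an $\mathcal F$-component. Picking $z^* \in K$ at which $u|_K$ attains a local maximum over a compact sub-neighbourhood of $K$ (the global maximum of $u|_K$ suffices when $K$ is compact), I form $v(z) := u(z) - u(z^*) - \delta |z - z^*|^2$ with $\delta > 0$ strictly smaller than the minimum eigenvalue of $i \partial\bar\partial u$ on a small closed ball around $z^*$. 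Then $v$ is bounded, smooth, and strictly psh on some $B(z^*, r)$, hence lies in $\mathcal F(B(z^*, r))$ by property~\ref{def-cone-bounded}; moreover $v(z^*) = 0$, and $v(z) \leq -\delta |z - z^*|^2$ for $z \in K \cap B(z^*, r)$ because $u(z) \leq u(z^*)$ there. This is precisely the function forbidden by Proposition~\ref{p:equivalence-locmax}, contradicting $K$ being a local maximum set for $\mathcal F$.

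The main obstacle I anticipate is selecting $z^*$ when $K$ is a non-compact $\mathcal F$-component, since $u|_K$ need not attain a global maximum. To handle this, I would invoke the local maximum property of $K$ once more: the maximum of $u$ over any compact slice $K \cap \overline{B(w, R)}$ must lie on $K \cap \partial B(w, R)$, and iterating this observation along the resulting maximising sequence should either extract a suitable limit point $z^*$ or derive a separate contradiction from the unbounded growth this would otherwise force on $u|_K$.
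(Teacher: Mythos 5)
Your proof of part (1) follows the paper's argument essentially verbatim (Proposition \ref{p:equivalence-locmax} to produce the local quadratic-peak function, then gluing with a reparametrized exhaustion via a smooth maximum, then Lemma \ref{l:stricly-psh-supports}), and is fine. Your proof of part (2) in the case where $K$ is a \emph{compact} local maximum set is also correct and in fact more self-contained than the paper's: taking $z^*$ to be a global maximum point of $u|_K$ and subtracting $u(z^*)+\delta|z-z^*|^2$ produces exactly the function excluded by Proposition \ref{p:equivalence-locmax}, with membership in $\mathcal F(B(z^*,r))$ guaranteed by \ref{def-cone-bounded}.

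The genuine gap is the non-compact $\mathcal F$-component case, and your proposed workaround does not close it. Your argument needs a point $z^*\in K$ and a radius $r$ with $u(z)\le u(z^*)$ on $K\cap B(z^*,r)$; for a non-compact $K$ there is no reason such a local maximum of $u|_K$ should exist ($u|_K$ may be unbounded, or bounded with unattained supremum and no local maxima at all --- think of $\Re z$ restricted to a complex line), and neither alternative is by itself contradictory, so the ``maximising sequence'' step has nothing to converge to and no separate contradiction to extract. The paper avoids this issue entirely: starting from the strictly psh $u_0\in\mathcal F(U)$ given by Lemma \ref{lemma:claim-prop-empty}, it uses property \ref{def-perturb-t} to produce $2n-1$ perturbations $v_j=u_0+t_j\rho_j\in\mathcal F(U)$ whose differentials, together with $du_0$, are independent at a point $x_0\in K$, so that the joint level set of $v_0,\dots,v_{2n-1}$ through $x_0$ is locally $\{x_0\}$; it then invokes \cite[Corollary 1.11]{Sl_pseudo}, which supplies a \emph{compact} 1-pseudoconcave subset of $K$ on which all the $v_j$ are constant --- and such a set cannot reduce to a point. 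Some input of this kind (a compact pseudoconcave subset of the component on which the relevant admissible functions are constant) is exactly what your argument is missing; without it the contradiction cannot be reached when $K$ is non-compact.
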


\begin{proof}
We prove the two assertion separately.
\begin{enumerate}
\item 
Denote $K:= \spt T$, and assume it is not
a local maximum set.
We are going to construct a psh function in neighbourhood
of $K$, which is stricly psh at a point of $\spt T$. This will contradict Lemma \ref{l:stricly-psh-supports}.

In order to construct such a function, we apply Proposition \ref{p:equivalence-locmax}:
there exist $x\in K$, 
$\epsilon>0$,
a neighbourhood $B$ of $x$ (which we can assume to be the unit ball centered at $x=0$ in local coordinates $y$),
and a smooth
strictly psh function $u$ on $B$ such that $u(0)=0$ and
$-\epsilon |y|^2 - \epsilon/8 \leq u(y)\leq -\epsilon |y|^2$ for all $y \in K \cap B$
(by 
\ref{def-new-dense},
we can assume that $u$ is smooth),
 where the first inequality follows
by possibly reducing the ball $B$. 

The function $u$ is only defined near $x$. In order to apply Lemma \ref{l:stricly-psh-supports}, we need to extend it,
as a psh function, on a neighbourhood of $K$. 
By assumption, $X$ admits an exhaustion 
function $\phi\in \mathcal F(X)$. We can also assume that
\[\phi(x)=-\epsilon/4
\quad \mbox{ and } \quad
|\phi| \leq \epsilon/4 
\quad  \mbox{ on } B.\]
By considering a smooth function $\chi$ which is $1$ on a small neighbourhood of $K$, and also compactly
supported in a small neighbourhood of $k$, we can then consider the function
$v$ defined by
\[
v=\begin{cases}
\chi \max_\epsilon(u\phi) & \mbox{ on } B,\\
\chi u & \mbox{ on } X\setminus B.
\end{cases}
\]
where $\max_\epsilon$ is smooth approximation of the $\max$ function.
 As in \cite[Proposition 3.2]{BM}, one can verify that this function is indeed psh in a neighbourhood of $K$,
 and coincides with $u$ in a neighbourhood of $x$. This concludes the proof.

\item
Assume that no $\mathcal F$ current is supported on $K$. Then, by Lemma \ref{lemma:claim-prop-empty}, 
there exists a strictly psh function in $\mathcal F(U)$, where $U$ is an open allowable
neighbourhood of $K$. Fix $x_0 \in K$.
First observe that $du (x_0)\neq 0$.
For $j=1, \dots, 2n-1$, choose a smooth function $\rho_j$ compactly supported in $U$, with the property
that $du_0, d\rho_1, \dots, d\rho_{2n-1}$ are linearly independent at $x_0$.
Property
\ref{def-perturb-t}
in Definition \ref{defi-admissible-class}
gives positive numbers $t_j$ such that $u_0 + t_j \rho_j \in \mathcal F(U)$ for all $j$.
Set $v_j := u_0 + t_j \rho_j$ for all $1\leq j \leq 2n_1$ and $v_0 := u_0$.
As $du_{0}, d\rho_1, \dots, d\rho_{2n-1} $ are linearly independent at $x_0$,
the same holds true
for
$dv_0, dv_1, \dots,  dv_{2n-1}$.
This implies that, in a neighbourhood of $x_0$, we have
$\cap_{j=0}^{2n-1} \{v_j (x)= u(x_0)\}= \{x_0\}$.
Since, by \cite[Corollary 1.11]{Sl_pseudo}, there is a compact 
1-pseudoconcave subset of $K$
where all the $v_j$'s are constant, this gives the desired contradiction.
\end{enumerate}
\end{proof}

In particular,
the following is then a consequence of Propositions
\ref{p:total-support}
and
\ref{p:support-locmax}.

\begin{corol}
Let $\mathcal F$ be an admissible class. 
If the set $S^{\mathcal F}$
is compact, it 
is a local maximum set.
\end{corol}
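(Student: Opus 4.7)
The plan is to combine the two preceding propositions in a single short chain. First I would invoke Proposition \ref{p:total-support} to produce a distinguished current $T \in \hat{\mathcal F}$ whose support is exactly $S^{\mathcal F}$. This is the crucial step: it promotes the a priori ``union of supports'' defining $S^{\mathcal F}$ to the honest support of one $\mathcal F$-current, on which the second result can be run.

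Once $T$ has been produced, the hypothesis that $S^{\mathcal F}$ is compact translates directly to compactness of $\spt T$. Proposition \ref{p:support-locmax}(1) then applies and gives that $\spt T$, and hence $S^{\mathcal F}$, is a local maximum set.

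There is no genuine obstacle at the level of the corollary itself; the argument is essentially a two-line composition of already-established results. All the substantive content is absorbed into Proposition \ref{p:support-locmax}(1), whose proof uses the local characterization from Proposition \ref{p:equivalence-locmax}, a $\max_{\epsilon}$-type gluing with an exhaustion function in $\mathcal F(X)$ to extend a local strictly plurisubharmonic $\mathcal F$-function to a neighbourhood of $\spt T$, and a final contradiction with Lemma \ref{l:stricly-psh-supports}(2). The only point that deserves mention is that the appeal to Proposition \ref{p:support-locmax}(1) relies on $\mathcal F(X)$ containing an exhaustion function, an assumption inherited from the ambient setting in which the corollary is applied.
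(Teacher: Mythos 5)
Your argument is exactly the paper's: the corollary is presented there as an immediate consequence of Propositions \ref{p:total-support} and \ref{p:support-locmax}, composed in precisely the order you describe (produce $T$ with $\spt T = S^{\mathcal F}$, then apply the compact-support case). You are also right to flag the exhaustion-function hypothesis --- Proposition \ref{p:support-locmax}(1) requires it while the corollary's statement omits it, so it must be read as inherited from the ambient setting, exactly as you note.
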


\section{Localization results}\label{s:final-remarks}

We end this note with some results
about the
 localization of
 $\mathcal{F}$-currents and singular loci to compact (or closed)
 subsets; first of all, by a standard distintegration procedure, we can decompose any $\mathcal{F}$-current
 on the levels of any admissible function, see \cite[Corollary 2.4]{BM} for the case of Levi currents.

\begin{propos}
Let $\mathcal F$ be an admissible class. 
Take $u \in \mathcal F$ and 
let $T$ be a  $\mathcal F$-current.
 There exists a measure $\mu$ on $\R$ and a collection of currents $T_c$, $c\in\R$ such that
\begin{itemize}
\item $T_c$ is supported on $Y_c=\{x\in X\ :\ u(x)=c\}$ for all $c\in\R$;
\item $T_c$ is non zero for $\mu$-almost every $c\in\R$;
\item whenever $T_c\neq 0$, $T_c$ is an $\mathcal F$-current;
\item for every $2$-dimensional form $\alpha$ on $X$ we have
$$\langle T,\alpha\rangle=\int_{\R}\langle T_c,\alpha\rangle d\mu(c)\;.$$
\end{itemize}
Moreover, if $u\in\Ci^1 \cap \mathcal F$ 
and $c$ is a regular value for $u$,
then $T_c=j_*S_c$, where
$j$ is the inclusion of $Y_c$ in $X$ and $S_c$ a current on the real manifold $Y_c$.
\end{propos}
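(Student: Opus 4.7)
The plan is to construct the family $\{T_c\}_{c\in \mathbb{R}}$ by disintegrating the trace measure $\|T\|$ along $u$, and then to promote this to a disintegration at the level of currents by keeping track of the $(1,1)$-vector direction of $T$. Concretely, I would set $\mu := u_*\|T\|$ and apply the standard disintegration theorem for Radon measures on locally compact second countable spaces, obtaining probability measures $\sigma_c$ on $X$, supported on $Y_c=\{u=c\}$, with $\|T\|=\int \sigma_c\, d\mu(c)$. Writing $T=\vec T\,\|T\|$ with $\vec T$ a $\|T\|$-measurable field of positive simple $(1,1)$-vectors of unit norm, I would then define $T_c := \vec T\,\sigma_c$. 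By construction $T_c$ is a positive bidimension $(1,1)$ current supported on $Y_c$, and $\langle T,\alpha\rangle=\int\langle T_c,\alpha\rangle\,d\mu(c)$ for every test form $\alpha$.

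The core of the proof is to verify that $T_c$ is in fact an $\mathcal{F}$-current for $\mu$-a.e.\ $c$. Lemma \ref{lemma-intersections-vanish} already gives $T\wedge\partial u=T\wedge\bar\partial u=T\wedge\partial\bar\partial u=0$, which is what makes $u$ ``invisible'' to $T$ in the complex calculus. For any $h\in\Ci^\infty_c(\mathbb{R})$ the current $h(u)T$ is therefore still $i\partial\bar\partial$-closed, because $d(h(u)T)=h'(u)du\wedge T=0$ and the same for the conjugate. Writing $h(u)T=\int h(c)\,T_c\,d\mu(c)$ and testing against $i\partial\bar\partial\varphi$ for smooth compactly supported $\varphi$, Fubini yields
\[
\int h(c)\,\langle i\partial\bar\partial T_c,\varphi\rangle\,d\mu(c)=0
\]
for every such $h$, so $i\partial\bar\partial T_c=0$ for $\mu$-a.e.\ $c$. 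The wedge condition against $v\in\mathcal{F}$ is handled identically, starting from $h(u)T\wedge i\partial\bar\partial v=0$ (which follows from \ref{def-Levic-wedge-ddc} and Lemma \ref{lemma-defi-limit}); here one approximates $v$ by smooth elements using \ref{def-new-dense} and uses Lemma \ref{lemma-defi-limit} again to pass to the limit. A countable dense family of test forms and admissible functions then fixes the common $\mu$-null exceptional set. Finally, $T_c$ is non-zero for $\mu$-a.e.\ $c$ because $\sigma_c$ has total mass $1$ and $\vec T$ has unit norm $\|T\|$-almost everywhere.

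For the last assertion, suppose $u\in\mathcal{F}\cap\Ci^1$ and $c$ is a regular value of $u$, so that $Y_c$ is a smooth real hypersurface of $X$. The vanishing $T_c\wedge\partial u\wedge\bar\partial u=0$ inherited from $T$, together with positivity of $T_c$, forces the $(1,1)$-vector $\vec T$ to annihilate $\partial u\wedge\bar\partial u$ $\sigma_c$-almost everywhere (Corollary \ref{corol-Tdudu}); since $du\neq 0$ on $Y_c$, this means $\vec T$ lies in the complexified tangent space to $Y_c$ at $\sigma_c$-a.e.\ point. Consequently $T_c$ is concentrated on the smooth submanifold $Y_c$ with tangential $(1,1)$-vectors, which is exactly the form $j_*S_c$ for a well-defined positive current $S_c$ on the real manifold $Y_c$.

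The main obstacle is the transfer of the global vanishings $i\partial\bar\partial T=0$ and $T\wedge i\partial\bar\partial v=0$ to each individual slice $T_c$: the disintegration theorem a priori only gives information for $\mu$-almost every $c$ and for a single test form at a time, so one must make sure that the $\mu$-null exceptional set does not depend on the choice of test form $\varphi$ or of $v\in\mathcal{F}$. This is resolved by separability: choosing a countable dense family of smooth test forms and, using \ref{def-new-dense} together with the closure properties of $\mathcal{F}$, a countable dense family of smooth elements of $\mathcal{F}$ on which to test the vanishings. The continuous dependence provided by Lemma \ref{lemma-defi-limit} then lets one extend the pointwise-in-$c$ conclusions from the countable test family to all of $\mathcal{F}$.
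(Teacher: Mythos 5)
Your disintegration of $\|T\|$ along $u$ (with $\mu=u_*\|T\|$, slice currents $T_c=\vec T\,\sigma_c$, and a countable dense family of test forms and smooth admissible functions to control the exceptional null set) is exactly the ``standard disintegration procedure'' the paper invokes without proof, deferring to \cite[Corollary 2.4]{BM}, so your proposal matches the intended argument, including the treatment of regular values via Corollary \ref{corol-Tdudu}. One small correction: the identity $d(h(u)T)=h'(u)\,du\wedge T$ is not available, since $T$ is only $i\partial\bar\partial$-closed rather than closed; you should instead compute $i\partial\bar\partial(h(u)T)$ directly and absorb the cross terms using the vanishings $T\wedge\partial u=T\wedge\bar\partial u=T\wedge\partial\bar\partial u=0$ from Lemma \ref{lemma-intersections-vanish}, exactly as the paper does when showing $vT$ is an $\mathcal F$-current in the extremality proposition, which gives the same conclusion.
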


Let now $K$ be a compact set inside the complex manifold $X$ and $\mathcal{U}$ be the set of
all relatively compact neighbourhoods of $K$ in $X$. We can
define the
\emph{singular locus of $\mathcal{F}$ in $K$}
as
$$\Sigma_K^\mathcal{F}=\bigcap_{U\in\mathcal{U}}\Sigma_U^\mathcal{F}\;.$$

\begin{teorema}
Let $\mathcal F$ be an admissible class, $K\subset X$ a compact set, and $\mathcal U$
be defined as above.
\begin{enumerate}
\item The set $\Sigma_K^\mathcal{F}$ can be partitioned in subsets
$\{F_\alpha\}_{\alpha\in A}$ such that, for every $U\in \mathcal{U}$ and
$\phi\in\mathcal{F}(U)$, $\phi$ is constant on $F_\alpha$ for all $\alpha\in A$.
\item Every extremal $\mathcal{F}$-current supported in $K$ is supported in some $F_\alpha$.
\item There exists $T_K\in\hat{\mathcal{F}}$, supported in $K$, such that its support is
maximal, and $\spt T_K\subseteq \Sigma_K^\mathcal{F}$.
\end{enumerate}
\end{teorema}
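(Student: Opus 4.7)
The plan is to handle the three assertions in turn, mirroring the corresponding global statements already established in Sections \ref{s:currents} and \ref{s:proof-thm}.

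For item (1), I define an equivalence relation $\sim_K$ on $\Sigma_K^{\mathcal F}$ by declaring $x \sim_K y$ whenever $\phi(x) = \phi(y)$ for every $U \in \mathcal{U}$ and every $\phi \in \mathcal F(U)$. The classes $\{F_\alpha\}_{\alpha \in A}$ of this relation form a partition of $\Sigma_K^{\mathcal F}$, and by construction each such $\phi$ is constant on every $F_\alpha$. This is the localized analogue of the $\mathcal F$-component decomposition.

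For item (2), let $T$ be an extremal $\mathcal F$-current with $\spt T \subseteq K$. For each $U \in \mathcal{U}$ we have $K \subseteq U$, so $U$ is in particular an allowable neighbourhood of $\spt T$. Fixing any $v \in \mathcal F(U)$, the argument of the proposition above asserting that extremal $\mathcal F$-currents force every element of $\mathcal F$ to be constant on their support applies verbatim: via Lemmas \ref{lemma-defi-limit} and \ref{lemma-intersections-vanish} one shows that $h(v)T$ is again an $\mathcal F$-current for $h$ convex increasing, and extremality then prohibits $v$ from varying on $\spt T$. Letting $U$ and $v$ vary, $\spt T$ lies entirely in some $F_\alpha$.

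For item (3), set
\[
\hat{\mathcal F}_K := \{T \in \hat{\mathcal F} : \spt T \subseteq K\}.
\]
I first verify $\spt T \subseteq \Sigma_K^{\mathcal F}$ for every $T \in \hat{\mathcal F}_K$: were there $x \in \spt T \setminus \Sigma_K^{\mathcal F}$, one could find $U \in \mathcal{U}$ and $\phi \in \mathcal F(U)$ strictly psh at $x$, and Lemma \ref{l:stricly-psh-supports}(2) would yield $x \notin \spt T$. Next, $\hat{\mathcal F}_K$ is closed in the weak topology of currents: conditions \ref{def-Levic-11}--\ref{def-Levic-ddc} pass to limits directly, the inclusion $\spt T_n \subseteq K$ is preserved since $K$ is closed, and the relation $T \wedge i\partial\bar\partial u = 0$ for $u$ in the space $\mathcal F(U_0)$ associated to a fixed allowable $U_0 \in \mathcal U$ is handled exactly as in Proposition \ref{p:total-support} (smooth $u$ by continuity of the wedge against $\partial\bar\partial$, general $u$ by the local smooth density of property \ref{def-new-dense}); the restriction axiom (A2) then upgrades this to any smaller allowable neighbourhood of the support. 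Since the ambient space of currents is separable, so is $\hat{\mathcal F}_K$; picking a countable dense family $(T_j)$ normalized to unit mass on a fixed neighbourhood of $K$, and weights $\varepsilon_j > 0$ with $\sum_j \varepsilon_j < \infty$, the series $T_K := \sum_j \varepsilon_j T_j$ converges in mass, lies in $\hat{\mathcal F}_K$, and has support $\overline{\bigcup_j \spt T_j}$, which by density coincides with the union of the supports of all elements of $\hat{\mathcal F}_K$.

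The step I expect to require the most care is the closure of $\hat{\mathcal F}_K$ under weak convergence, and specifically the verification of condition \ref{def-Levic-wedge-ddc}: this condition is only required to hold on \emph{some} allowable neighbourhood of the support, which in principle depends on the current, so the argument relies on showing that testing against $u \in \mathcal F(U_0)$ for a fixed allowable $U_0 \supseteq K$ is sufficient and compatible with the restriction to smaller neighbourhoods via (A2). This is the same subtlety already addressed in Proposition \ref{p:total-support}, and the resolution there transfers to the compact-localized setting without modification.
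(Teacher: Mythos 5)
Your items (1) and (2) follow the paper's proof essentially verbatim: the paper realizes each $F_\alpha$ as the intersection of the net $\{F_{x,U}\cap K\}_{U\in\mathcal U}$ of $\mathcal F$-components, which is exactly the set of classes of your relation $\sim_K$, and item (2) is in both cases a direct appeal to the proposition on extremal currents. For item (3) you take a genuinely different route. The paper applies Lemma \ref{lemma-14ok} to identify the currents in question with the compact convex set of positive $\partial\bar\partial$-closed $(1,1)$-currents of mass $1$ supported in $K$, invokes the Krein--Milman theorem to obtain a dense sequence of \emph{extremal} currents $T_j$, sums $T_K=\sum 2^{-j}T_j$, and deduces $\spt T_K\subseteq\Sigma_K^{\mathcal F}$ from item (2). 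You instead transplant the closedness-and-separability argument of Proposition \ref{p:total-support} to $\hat{\mathcal F}_K$ and prove the inclusion $\spt T\subseteq\Sigma_K^{\mathcal F}$ directly for every $T\in\hat{\mathcal F}_K$ via Lemma \ref{l:stricly-psh-supports}(2). Both arguments are correct; your direct support verification is arguably cleaner than the paper's appeal to ``the previous point'' (which tacitly uses that $\Sigma_K^{\mathcal F}$ is closed and that the support of the weighted sum is the closure of the union of the $\spt T_j$), while the paper's use of Krein--Milman yields the extra structural information that $T_K$ is built from extremal currents, hence that $\spt T_K$ is covered by the sets $F_\alpha$. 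Finally, the subtlety you flag about condition \ref{def-Levic-wedge-ddc} in the weak closedness of $\hat{\mathcal F}_K$ evaporates in this localized setting: every element of $\hat{\mathcal F}_K$ has compact support, so by Lemma \ref{lemma-14ok} conditions \ref{def-Levic-non0}--\ref{def-Levic-ddc} together with $\spt T\subseteq K$ already imply \ref{def-Levic-wedge-ddc} for every allowable neighbourhood of the support, and the closedness of $\hat{\mathcal F}_K\cup\{0\}$ is immediate without re-running the approximation argument of Proposition \ref{p:total-support}.
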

\begin{proof}

\begin{enumerate}
\item Consider, for each $U\in\mathcal{U}$ and $x\in \Sigma_K^\mathcal{F}$, the
$\mathcal{F}$-component $F_{x,U}$ containing $x$ in $U$; we have that
$\{F_{x,U}\cap K\}_{U\in\mathcal{U}}$ is a net of subsets whose intersection
is a set $F_{\alpha}\subseteq\Sigma_K^\mathcal{F}$ which has the desired property.

\item If $T$ is an extremal $\mathcal{F}$-current supported in $K$, functions in $\mathcal{F}(U)$
are constant on $\spt T$ for all $U\in\mathcal{U}$; 
therefore, we have $\spt T\subseteq F_\alpha$ for some $\alpha$.

\item By Lemma \ref{lemma-14ok}, 
$\mathcal{F}$-currents supported on $K$ are non-zero positive pluriharmonic currents
of bidimension $(1,1)$.
Consider the compact convex set of $(1,1)$-bidimension, positive, $\de\debar$-closed
currents supported in $K$ and with mass $1$.
By Krein-Milman theorem,
 this set 
 is the closure of the convex hull of its extremal elements. Therefore,
 taking a dense sequence of extremal currents $T_j$,
 we can build the current
$$T_K=\sum 2^{-j}T_j,$$
which is again a $\mathcal{F}$-current.
Then, $\spt T_K$ contains the support of every $\mathcal{F}$-current
supported in $K$. By the previous point, we have that $\spt T_K\subseteq\Sigma_K^\mathcal{F}$.
\end{enumerate}
\end{proof}

\begin{rem}\begin{enumerate}
\item The sets $F_\alpha$ as above 
satisfy a local maximum property, outside a suitably defined ``boundary",
i.e., the Hausdorff limit of $\overline{F_{x,U}}\cap bU$ for $U\in\mathcal{U}$.
\item In general, currents $T_j$ may not have disjoint supports: consider $K\simeq\mathbb{P}^2$ as the exceptional divisor in the blow-up of $\C^3$ at the origin; for any allowable class $\mathcal{F}$, $\Sigma_K^\mathcal{F}=K$ and we can pick $T_j$ as the current of integration on some projective line $\mathbb{P}^1$ in $K$. Obviously, all the supports of the $T_j$'s will intersect and in fact the whole $K$ is one unique $\mathcal{F}$-component.
\item The sets $\Sigma_{K}^\mathcal{F}$ and $\spt T_K$ can be different.
Consider in $\mathbb{P}^2$ the set $C:=\overline{K}_+$ for a given H\'enon map $f$
(i.e., a polynomial diffeomorphism of $\mathbb C^2$). Recall that 
$K^+$
 is the set
  of points in $\mathbb C^2$ with bounded forward orbit.
Then $C$
is
compact in the projective plane and, by the main result of \cite{DS_rigid}, it 
supports only one positive, $\de\debar$-closed, $(1,1)$-current of mass
$1$, which is the Green current $T_+$. So,
$T_+=T_{C}$ and $\spt T_{C}$ is
equal to the closure in $\mathbb P^2$
of $J_+=\partial K_+$. 
If $f$ is appropriately chosen, 
some connected component
$\Omega$
of  $K^+ \setminus J_+$ is a Fatou-Bieberbach domain,
biholomorphic to $\C^2$, and we have $\partial \Omega = J^+$
\cite{RR,BS2}.
Therefore, every psh function
which is continuous on $C':= \Omega \cup J^+$
needs to
 be constant
on $C'$
 (since it restricts 
 to a bounded psh function on $\Omega\cong \C^2$).
  Hence, $\Sigma_{C'}^\mathcal{F}=C' \supsetneq \spt T_{C'}= \spt T_C$.
  (We used in this remark the notations $C$ and $C'$ to avoid confusion with
  the set $K$ already defined in the dynamical setting
  as the set of points with bounded both forward and backward orbits).
\item In the case of all continuous plurisubharmonic functions, the set $\Sigma_K^\mathcal{F}$
is the same as the psh kernel defined in \cite{MT}, or the core of a compact set as defined in \cite{Sh21}.
\end{enumerate}
\end{rem}

\begin{bibdiv}
\begin{biblist}

\bib{BrSib}{article}{
   author={Berndtsson, Bo},
   author={Sibony, Nessim},
   title={The $\overline\partial$-equation on a positive current},
   journal={Inventiones Mathematicae},
   volume={147},
   date={2002},
   number={2},
   pages={371--428},
   issn={0020-9910},
   doi={10.1007/s002220100178},
}

  \bib{BM}{article}{
  author={Bianchi, Fabrizio},
     author = {Mongodi, Samuele},
      title = {On minimal kernels and Levi currents on weakly complete complex manifolds},
    journal = {Proceedings of the American Mathematical Society},
       year = {2022},
doi={10.1090/proc/15946}
}

\bib{BS2}{article}{
  title={Polynomial diffeomorphisms of $\mathbb{C}^2$. {I}{I}: Stable manifolds and recurrence},
  author={Bedford, Eric},
  author={Smillie, John},
  journal={Journal of the American Mathematical Society},
  volume={4},
  number={4},
  pages={657--679},
  year={1991},
 doi={10.2307/2939284},
}

\bib{DS}{article}{
  title={Pull-back of currents by holomorphic maps},
  author={Dinh, Tien-Cuong},
  author={Sibony, Nessim},
  journal={Manuscripta Mathematica},
  volume={123},
  number={3},
  pages={357--371},
  year={2007},
  publisher={Springer},
  doi={10.1007/s00229-007-0103-5},
}
\bib{DS_rigid}{article}{
  doi = {10.3934/jmd.2014.8.499},
  url = {https://doi.org/10.3934/jmd.2014.8.499},
  year = {2015},
  publisher = {American Institute of Mathematical Sciences ({AIMS})},
  volume = {8},
  number = {3/4},
  pages = {499--548},
  author = {Dinh, Tien-Cuong},
  author = {Sibony, Nessim},
  title = {Rigidity of Julia sets for H{\'{e}}non type maps},
  journal = {Journal of Modern Dynamics},
}
\bib{F}{article}{
author = {Federer, Herbert},
title = {{Colloquium lectures on geometric measure theory}},
volume = {84},
journal = {Bulletin of the American Mathematical Society},
number = {3},
publisher = {American Mathematical Society},
pages = {291 -- 338},
year = {1978},
doi = {bams/1183540620},
}

 \bib{Gam}{book}{
	doi = {10.1017/cbo9780511662409},
  url = {https://doi.org/10.1017/cbo9780511662409},
  year = {1979},
  publisher = {Cambridge University Press},
  author = {T. W. Gamelin},
  title = {Uniform Algebras and Jensen Measures},
}

\bib{GS}{article}{
doi = {10.1016/0022-1236(80)90081-6},
  url = {https://doi.org/10.1016/0022-1236(80)90081-6},
  year = {1980},
  publisher = {Elsevier {BV}},
  volume = {35},
  number = {1},
  pages = {64--108},
  author = {Gamelin, Theodore William},
	author={Sibony, Nessim},
  title = {Subharmonicity for uniform algebras},
  journal = {Journal of Functional Analysis},
}

\bib{Gra}{article}{
  author={Grauert, Hans},
  title={On Levi's problem and the imbedding of real-analytic manifolds},
  journal={Annals of Mathematics},
  volume={68},
  number={2},
  pages={460--472},
  year={1958},
  doi={10.2307/1970257},
}

\bib{H}{article}{
  title={On smoothing of plurisubharmonic functions on unbounded domains},
  author={Harz, Tobias},
  journal={arXiv preprint arXiv:2104.14448},
  year={2021},
}

\bib{HST1}{article}{
  title={On defining functions for unbounded pseudoconvex domains {I}},
  author={Harz, Tobias},
  author={Shcherbina, Nikolay},
  author={Tomassini, Giuseppe},
  journal={Mathematische Zeitschrift},
  volume={286}, 
  pages={987--1002},
  year={2017},
  doi={10.1007/s00209-016-1792-9},
}

\bib{HST2}{article}{
  title={On defining functions for unbounded pseudoconvex domains {I}{I}},
  author={Harz, Tobias},
  author={Shcherbina, Nikolay},
  author={Tomassini, Giuseppe},
  journal={Journal of Geometric Analysis},
  volume={30}, 
  pages={2293--2325},
  year={2020},
  doi={10.1007/s12220-017-9846-8},
}

\bib{HST3}{article}{
  title={On defining functions for unbounded pseudoconvex domains {I}{I}{I}},
  author={Harz, Tobias},
  author={Shcherbina, Nikolay},
  author={Tomassini, Giuseppe},
  journal={Sbornik: Mathematics},
  volume={212},
  pages={859},
  year={2021},
  doi={10.1070/SM8898},
}

\bib{Levi}{article}{
doi = {10.1007/bf02419336},
  url = {https://doi.org/10.1007/bf02419336},
  year = {1910},
  publisher = {Springer Science and Business Media {LLC}},
  volume = {17},
  number = {1},
  pages = {61--87},
  author = {Levi, Eugenio Elia},
  title = {Studii sui punti singolari essenziali delle funzioni analitiche di due o pi{\`{u}} variabili complesse},
  journal = {Annali di Matematica Pura ed Applicata},
	}

\bib{M}{article}{
	author={Mongodi, Samuele},
	title={Weakly complete domains in Grauert type surfaces},
	journal={Annali di Matematica Pura ed Applicata (1923 -)},
	volume={198},
  number={4},
  pages={1185--1189},
  year={2019},
  doi={10.1007/s10231-018-0814-0},
	}

\bib{MZ}{article}{
author={Mongodi, Samuele}, 
author={Slodkowski, Z.},
title={Domains with a continuous exhaustion in weakly complete surfaces},
 journal={Mathematische Zeitschrift},
 volume={296}, 
 pages={1011–1019},
 year={2020}, 
 doi={10.1007/s00209-020-02466-z},
 }

\bib{crass}{article}{
author={Mongodi, Samuele},
   author={Slodkowski, Zbigniew},
   author={Tomassini, Giuseppe},
title = {On weakly complete surfaces},
journal = {Comptes Rendus Mathematique},
volume = {353},
number = {11},
pages = {969 -- 972},
year = {2015},
doi = {10.1016/j.crma.2015.08.009},
}

  \bib{mst}{article}{
     author = {Mongodi, Samuele},
     author = {Slodkowski, Zbigniew},
		author = {Tomassini, Giuseppe},
      title = {Weakly complete complex surfaces},
    journal = {Indiana University Mathematics Journal},
     volume = {67},
       year = {2018},
     number = {2},
      pages = {899 -- 935},
      doi={10.1512/iumj.2018.67.6306},
}

	\bib{mst2}{article}{
author = {Mongodi, Samuele},
     author = {Slodkowski, Zbigniew},
		author = {Tomassini, Giuseppe},
title = {Some properties of Grauert type surfaces},
journal = {International Journal of Mathematics},
volume = {28},
number = {8},
pages = {1750063 (16 pages)},
year = {2017},
doi = {10.1142/S0129167X1750063X},
}

\bib{MT}{incollection}{
  title={Minimal kernels and compact analytic objects in complex surfaces},
  author={Mongodi, Samuele},
  author={Tomassini, Giuseppe},
  booktitle={Advancements in Complex Analysis},
  pages={329--362},
  year={2020},
 doi={10-1007/978-3-030-40120-7{\_}9},
}

\bib{Na1}{article}{
  title={The Levi problem for complex spaces},
  author={Narasimhan, Raghavan},
  journal={Mathematische Annalen},
  volume={142},
  number={4},
  pages={355--365},
  year={1961},
  doi={10.1007/BF01451029},
}

\bib{Na2}{article}{
  title={The Levi problem for complex spaces II},
  author={Narasimhan, Raghavan},
  journal={Mathematische Annalen},
  volume={146},
  number={3},
  pages={195--216},
  year={1962},
  doi={10.1007/BF01470950},
}

\bib{OS}{article}{
  title={Bounded psh functions and pseudoconvexity in K{\"a}hler manifold},
  author={Ohsawa, Takeo},
  author={Sibony, Nessim},
  journal={Nagoya Mathematical Journal},
  volume={149},
  pages={1--8},
  year={1998},
  doi={10.1017/S0027763000006516},
}

\bib{Oka1}{article}{
   author={Oka, Kiyosi},
   title={Sur les fonctions analytiques de plusieurs variables. VI. Domaines
   pseudoconvexes},
   language={French},
   journal={T\^{o}hoku Math. J.},
   volume={49},
   date={1942},
   pages={15--52},
   issn={0040-8735},
}

\bib{Oka2}{article}{
author = {Oka, Kiyoshi},
     title = {Sur les fonctions analytiques de plusieurs variables. {IX}.
              {D}omaines finis sans point critique int\'{e}rieur},
  journal = {Japanese Journal of Mathematics},
    volume = {23},
  year = {1953},
     pages = {97--155 (1954)},
      issn = {0075-3432},
   doi = {10.4099/jjm1924.23.0\_97},
       url = {https://doi.org/10.4099/jjm1924.23.0_97},
}

\bib{PS}{article}{
  doi = {10.1090/proc/14222},
  url = {https://doi.org/10.1090/proc/14222},
  year = {2019},
  publisher = {American Mathematical Society ({AMS})},
  volume = {147},
  number = {6},
  pages = {2413--2424},
  author = {Poletsky, Evgeny A.},
  author = {Shcherbina, Nikolay},
  title = {Plurisubharmonically separable complex manifolds},
  journal = {Proceedings of the American Mathematical Society},
}

\bib{RR}{article}{
  title={Holomorphic maps from $\mathbb{C}^n$ to $\mathbb{C}^n$},
  author={Rosay, Jean-Pierre},
  author={Rudin, Walter},
  journal={Transactions of the American Mathematical Society},
  volume={310},
  number={1},
  pages={47--86},
  year={1988},
  doi={10.1090/S0002-9947-1988-0929658-4},
}

\bib{Roth}{article}{
   author={Rothstein, Wolfgang},
   title={Zur Theorie der analytischen Mannigfaltigkeiten im Raume von $n$
   komplexen Ver\"{a}nderlichen},
   language={German},
   journal={Mathematische Annalen},
   volume={129},
   date={1955},
   pages={96--138},
   issn={0025-5831},
   doi={10.1007/BF01362361},
}

\bib{Rudin}{article}{
author = {Rudin, Walter},
title = {{Analyticity, and the maximum modulus principle}},
volume = {20},
journal = {Duke Mathematical Journal},
number = {3},
publisher = {Duke University Press},
pages = {449 -- 457},
year = {1953},
doi = {10.1215/S0012-7094-53-02045-6},
URL = {https://doi.org/10.1215/S0012-7094-53-02045-6},
}

\bib{Sh21}{article}{
  title={On compact subsets possessing strictly plurisubharmonic functions},
  author={Shcherbina, Nikolay},
  journal={Izvestiya: Mathematics},
  volume={85},
  number={3},
  pages={605--618},
  year={2021},
  publisher={IOP Publishing},
  doi={10.1070/IM9059},
}

\bib{Sib_classe}{article}{
doi = {10.1215/s0012-7094-87-05516-5},
  url = {https://doi.org/10.1215/s0012-7094-87-05516-5},
  year = {1987},
  publisher = {Duke University Press},
  volume = {55},
  number = {2},
  author = {Sibony, Nessim},
  title = {Une classe de domaines pseudoconvexes},
  journal = {Duke Mathematical Journal},
}

\bib{Sib_coll}{incollection}{
doi = {10.1090/pspum/052.1/1128526},
  url = {https://doi.org/10.1090/pspum/052.1/1128526},
  year = {1991},
  publisher = {American Mathematical Society},
  pages = {199--231},
  author = {Sibony, Nessim},
  title = {Some aspects of weakly pseudoconvex domains},
	booktitle={Several complex variables and complex geometry, {P}art 1
              ({S}anta {C}ruz, {CA}, 1989)},
	series={Proc. Sympos. Pure Math.},
	volume={52},
	
}

\bib{Sib_pf}{article}{
  title={Pfaff systems, currents and hulls},
  author={Sibony, Nessim},
  journal={Mathematische Zeitschrift},
  volume={285},
  number={3-4},
  pages={1107--1123},
  year={2017},
  publisher={Springer},
  doi={10.1007/s00209-016-1740-8},
}

\bib{Sib}{article}{
	title={Levi problem in complex manifolds},
  author={Sibony, Nessim},
  journal={Mathematische Annalen},
  date={2018},
  volume={371},
  pages={1047--1067},
	doi={10.1007/s00208-017-1539-x},
}

\bib{Sib_pseudo}{article}{
  title={Pseudoconvex domains with smooth boundary in projective spaces},
  author={Sibony, Nessim},
  journal={Mathematische Zeitschrift},
  volume={298},
 pages={625–-637},
  year={2021},
  publisher={Springer},
  doi={10.1007/s00209-020-02613-6},
}

\bib{Siu}{article}{
	title={Pseudoconvexity and the problem of Levi},
	author={Siu, Yum-Tong},
	volume={84},
	journal={Bulletin of the American Mathematical Society},
	number={4},
	publisher={American Mathematical Society},
	pages={481 -- 512},
	year={1978},
	doi = {bams/1183540919},
}

\bib{Sl}{article}{
  title={Local maximum property and q-plurisubharmonic functions in uniform algebras},
  author={Slodkowski, Zbigniew},
  journal={Journal of mathematical analysis and applications},
  volume={115},
  number={1},
  pages={105--130},
  year={1986},
  doi={10.1016/0022-247X(86)90027-2},
}

\bib{Sl_pseudo}{incollection}{
  title={Pseudoconcave decompositions in complex manifolds},
  author={Slodkowski, Zbigniew},
  booktitle={Contemporary Mathematics, Advances in Complex geometry},
  volume={31},
  pages={239--259},
  year={2019},
  doi={10.1090/conm/735/14829},
}

\bib{ST}{article}{
    author={Slodkowski, Zibgniew},
   author={Tomassini, Giuseppe},
   title={Minimal kernels of weakly complete spaces},
   journal={Journal of Functional Analysis},
   volume={210},
   date={2004},
   number={1},
   pages={125--147},
   doi={10.1016/S0022-1236(03)00182-4},
}

\bib{Sul}{article}{doi = {10.1007/bf01390011},
  url = {https://doi.org/10.1007/bf01390011},
  year = {1976},
  publisher = {Springer Science and Business Media {LLC}},
  volume = {36},
  number = {1},
  pages = {225--255},
  author = {Sullivan, Dennis},
  title = {Cycles for the dynamical study of foliated manifolds and complex manifolds},
  journal = {Inventiones Mathematicae},
}

\bib{Wer}{article}{
 title={Maximum modulus algebras and singularity sets}, 
volume={86}, 
doi={10.1017/S0308210500012233}, 
number={3-4}, 
journal={Proceedings of the Royal Society of Edinburgh: Section A Mathematics}, 
publisher={Royal Society of Edinburgh Scotland Foundation}, 
author={Wermer, John}, 
year={1980}, 
pages={327–331},
}

  \end{biblist}
\end{bibdiv}

\end{document}